\renewcommand*{\@fnsymbol}[1]{\ensuremath{\ifcase#1 %
   \or \text{\Sun}
   \or \text{\Mercury}
   \or \text{\Venus}
   \or \text{\Earth}
   \or \text{\Mars}
   \or \text{\Jupiter}
   \or \text{\Saturn}
   \or \text{\Uranus}
   \or \text{\Neptune}
   \or \text{\Pluto}
  \else\@ctrerr\fi}}
\date{}
\DeclareMathOperator{\exponential}{exponential}
\DeclareMathOperator{\erlang}{Erlang}
\newcommand{\tcal}{\mathcal{T}}
\title{\MakeUppercase{Notes on Growing a Tree in a Graph}\thanks{%
This research was partly supported by NSERC and by NSF grants DMS1661063
and DMS1362785.
Some of this work was carried out while Vida Dujmovi\'c, Pat Morin, and
Bruce Reed were visiting IMPA in Rio de Janeiro. They thank the Institute
for its hospitality. }}
\author{
   Luc~Devroye,\thanks{School of Computer Science, McGill University, Montr\'eal, Canada}\,\,
   Vida~Dujmovi\'c,\thanks{Department of Computer Science and Electrical Engineering, University of Ottawa, Ottawa, Canada}\,\,
   Alan~Frieze,\thanks{Department of Mathematical Sciences, Carnegie Mellon University, Pittsburgh, USA. Research supported in part by NSF Grant DMS1362785}\,\,
   Abbas~Mehrabian,\thanks{Department of Computer Science, University of British Columbia, Vancouver, Canada. 
   This author was
       supported by an NSERC Postdoctoral Fellowship and a Simons-Berkeley
       Research Fellowship. Part of this work was done while he was
       visiting the Simons Institute for the Theory of Computing at UC
       Berkeley.}\,\,
   Pat~Morin,\thanks{School of Computer Science, Carleton University, Ottawa, Canada}\,\, and
   Bruce~Reed\thanks{CNRS, Projet COATI, I3S (CNRS and UNS) UMR7271 and INRIA, Sophia Antipolis, France. Instituto Nacional de Matem\'atica Pura e Aplicada (IMPA), Brasil. Visiting
Research Professor, ERATO Kawarabayashi Large Graph Project, Japan.}
}
\begin{document}
\maketitle

\begin{abstract}
We study the height of a spanning tree $T$ of a graph $G$ obtained by
starting with a single vertex of $G$ and repeatedly selecting, uniformly
at random, an edge of $G$ with exactly one endpoint in $T$ and adding
this edge to $T$.
\end{abstract}

\newpage 
\tableofcontents
\newpage

\pagenumbering{arabic}
\section{Introduction}
Let $s$ be a vertex of a simple connected graph $G$ on $n$ vertices.
We build a sequence $T_1,T_2,\dots,T_n$ of random subtrees of $G$ as follows.
The tree $T_1$ has a single vertex, $s$.
For each $1<i\leq n$, tree $T_i$ is obtained by choosing
a uniformly random edge of $G$ with exactly one endpoint in $T_{i-1}$,
and adding the edge to $T_{i-1}$.
Note that $T_n$ is a (not necessarily uniform) random spanning tree of $G$ rooted at $s$, which we denote by $\tcal(G,s)$.
In this paper we study the height (maximum length of a root-to-leaf path) of $\tcal(G,s)$ and give several bounds for it in terms of parameters of $G$.

In the special case when $G$ is the complete graph, 
each tree $T_i$ is obtained from $T_{i-1}$ by choosing a uniformly random node of $T_{i-1}$ and joining a new leaf to that node.
This is the well studied \emph{random recursive tree} process,
and 
Devroye \cite{devroye:branching} and Pittel \cite{pittel:note} have
shown that the height of $T_n=\tcal(K_n,s)$ is
$(e+o(1))\ln n$ with probability $1-o(1)$.


\subparagraph{Our results.}
Let $D=D(G)$ and $\Delta=\Delta(G)$ denote the diameter and maximum degree of $G$, respectively, and let us denote 
the height of a tree $T$ by $h(T)$.
An obvious lower bound for $h(\tcal(G,s))$ is $D/2$.
We prove the following bounds hold with probability $1-o_n(1)$ for any $n$-vertex graph $G$ and any $s\in V(G)$.
(The notation $o_k(1)$ denotes the set of functions $f:\R\to\R$ such
that $f(k)\to 0$ as $k\to\infty$.)


\begin{enumerate}
  \item In Theorem~\ref{thm:alantree-upper-bound} we show $h(\tcal(G,s))\in O(\Delta(D+\log n))$.
  For $D\in \Omega(\log \Delta)$ this is tight: in~\thmref{genericlowerbound} we show that for every $\Delta\ge  2$ and every $D \ge e^6\ln\Delta$, there exist $G$ and $s$ with 
   $h(\tcal(G,s))\in \Omega(\Delta(D+\log n))$.

  \item If $G$ is $d$-degenerate (that is, every subgraph of $G$ has a vertex of degree at most $d$),
  then in Theorem~\ref{thm:alantree-upper-bound-degenerate} we show  $h(\tcal(G,s))\in
    O(\sqrt{d\Delta}(D+\log n))$.  The class of $O(1)$-degenerate
    graphs is quite rich and includes every minor-closed graph family. 
  This upper bound is tight, even for planar graphs ($d=5$), graphs
  of thickness $t$ ($d=5t$), and graphs of treewidth $k$ ($d=k$).
  (The concepts of Euler genus, thickness, and treewidth
    are defined in \secref{degeneracy-discussion}).

For $D\in \Omega(\log \Delta)$ and planar graphs (which are 5-degenerate) this is tight:
  in~\thmref{planartight} we show 
  for any $\Delta>2$ and $D>10^6\ln \Delta$
  there exists a planar graph $G$ and vertex $s$ with
  $h(\tcal(G,s))\in
      \Omega(\sqrt{\Delta}(D+\log n))$.
  
Also, for $D\in \Omega(\log \Delta)$ and $d\leq\Delta$ this is tight:
in~\thmref{alantree-lower-bound-degenerate} we show 
for any $\Delta>1$, $D>10^6\ln \Delta$
and $d \leq \Delta$
there exist a $d$-degenerate graph $G$ and vertex $s$ with
$h(\tcal(G,s))\in
    \Omega(\sqrt{d\Delta}(D+\log n))$
    
  \item  If $G$ has Euler genus less than $C\sqrt{\Delta}D/\log\Delta$, then
  $h(\tcal(G,s))\in O(\sqrt{\Delta}(D+\log n))$ 
  (see \thmref{alantree-upper-bound-genus})  .
  For $D\in \Omega(\log \Delta)$ and zero Euler genus this is tight:
  in~\thmref{planartight} we show 
  for any $\Delta>2$ and $D>10^6\ln \Delta$
  there exist a planar graph $G$ and vertex $s$ with
  $h(\tcal(G,s))\in
      \Omega(\sqrt{\Delta}(D+\log n))$

  \item For any $d,k\geq1$, if $G$ is the $d$-dimensional grid of side-length $k$ (which has
  $n=(k+1)^d$ vertices), we have $h(\tcal(G,s))\in O(dk+d^{5/3}\ln(k+1))$.
  If $k=2$ or $k/\ln(k+1)=\Omega(d^{2/3})$, we have $h(\tcal(G,s))\in \Theta(D)=\Theta(dk)$
  (see \thmref{alantree-upper-bound-hypercube} and \corref{bestgrid}).

  \item If $G$ has edge-expansion factor\footnote{The
  edge expansion factor and related quantities are defined in
  \secref{edge-expanders}.} (i.e., Cheeger constant) $\Phi$, then
  $h(\tcal(G,s))\in O(\Phi^{-1}\Delta\log n)$ (see \thmref{alantree-upper-bound-cheeger}).  This implies, for example,
  that $h(\tcal(G,s))\in O(\log n)$ if $G$ is the complete graph or if $G$ is
  a random $\Delta$-regular graph (since a random $\Delta$-regular
  graph has $\Phi\in \Omega(\Delta)$, see~\cite{bollobas:isoperimetric}).
\end{enumerate}

Our main tool for proving upper bounds, \lemref{meta-theorem}, bounds
$h(\tcal(G,s))$ in terms of the first-passage percolation cover time and
the number of paths of  a given length starting at $s$.  To prove our
results using this tool, we prove several new bounds on first-passage
percolation cover times  as well as the number of simple paths in various
families of graphs, which are of independent interest.

\subparagraph{Our results on first-passage percolation cover time.}
Suppose independent exponential(1) random variables $\{\tau_e\}$ are assigned to edges of $G$.
Let $\Gamma(s,v)$ denote the set of all $(s,v)$-paths in the graph.
Then the \emph{first-passage percolation cover time} is defined as 
\[
	\tau(G,s) = \max_{v\in V(G)} \min_{\gamma\in\Gamma(s,v)} \sum_{e\in\gamma} \tau(e)
\]
In \lemref{percolation} we show a general upper bound of 
$O(\ln n + D)$ for $\tau(G,s)$. (This and the following results hold with probability $1-o_n(1)$.)

In the special case when $G$ is the $d$-dimensional grid with side length $k$ (and diameter $dk$), we prove the improved bound 
$\tau(G,s)=O(k)$.
The special case of $k=1$, namely the $d$-cube graph, was studied by Fill and Pemantle~\cite{fill.pemantle:percolation}, who showed $1.414\leq \tau(G,s)\leq 14.041$.
The upper bound was subsequently improved to
$1.694$
by Bollob\'as and Kohayakawa~\cite{bollobas.kohayakawa:on} 
and recently to $1.575$ by Martinsson~\cite{martinsson:unoriented}.


The remainder of the paper is organized as follows: 
\Secref{inequalities} presents some preliminaries
and useful facts about sums of independent  random variables,
In~\secref{sec:fpp_connection} we present the connection with first-passage percolation and prove a general upper bound.
\Secref{degeneracy}--\secref{grids} present our
upper bounds on $h(T)$.  
\Secref{lower-bounds} and
\secref{lowerdegenerate}
present families of
graphs with matching lower bounds.

We use the following notational conventions: $\log x$ denotes the binary
logarithm of $x$ and $\ln x$ denotes the natural logarithm of $x$.
Every graph, $G$, that we consider
is finite, simple, undirected and connected, and $n$ denotes its number of vertices.

\section{Preliminaries}
\label{sec:prelim}
\seclabel{inequalities}

Recall that an $\exponential(\lambda)$ random variable, $X$, has a
distribution defined by
\[
   \Pr\{X>x\} = e^{-\lambda x}  \enspace , \enspace x\ge 0 \enspace ,
\]
and mean $\E[X] = \int_0^{\infty}
\Pr\{X>x\}\,\mathrm{d}x = 1/\lambda$.  We make extensive use of the fact
that exponential random variables are \emph{memoryless}:
\[
    \Pr\{X > t+x\mid X>t\} = \frac{\Pr\{X> t+x\}}{\Pr\{X > t\}}
           = \frac{e^{-\lambda(t+x)}}{e^{-\lambda t}} = e^{-\lambda x} = \Pr\{X > x\} \enspace .
\]
We will also often take the minimum of $\delta$ independent $\exponential(\lambda)$ random variables and use the fact that this is distributed like an
$\exponential(\lambda \delta)$ random variable:
\[
   \Pr\{\min\{X_1,\ldots,X_\delta\} > x\} 
      = (\Pr\{X_1 > x\})^{\delta}
      = e^{-\delta\lambda x} \sim \exponential(\lambda \delta) \enspace .
\]
We will make use of two concentration inequalities for sums of exponential random
variables, both of which can be obtained using Chernoff's bounding
method (see, e.g., \cite[Theorem~5.1]{janson}).  If $Z_1,\ldots,Z_k$ are independent $\exponential(\lambda)$
random variables (so that they each have mean $\mu=1/\lambda$), then
for all $d>1$,
\begin{equation}
    \Pr\left\{\sum_{i=1}^k Z_i \le \mu k/d\right\} \le \exp(-k(\ln d -1 + 1/d)) \le \left(\frac{e}{d}\right)^k  \eqlabel{head-bound}
\end{equation}
and for all $t>1$, 
\begin{equation}
    \Pr\left\{\sum_{i=1}^k Z_i \ge \mu k t\right\} \le \exp (k-kt/2) \enspace . \eqlabel{tail-bound}
\end{equation}
The sum of $k$ independent $\exponential(\lambda)$
random variables is called an $\erlang(k,\lambda)$ random variable.


For positive integers $a$ and $b$, we define the random variable $Y_{a,b}$
as follows: Consider a tree in which the root has $a$ children,
 and each of the root's children have $b$ children.
Put an independent exponential(1) weight on each edge.  Then $Y_{a,b}$
is defined as the minimum weight of a path from the root to a leaf.
%
The following auxiliary lemma is proved in \appref{yab}.

\begin{lem}
\lemlabel{sqrtab}
Let $X_1,\dots,X_m$ be i.i.d.\ distributed as $Y_{a,b}$ for some $a,b$.
Then 
\[ \E[X_1] =O(1/a+1/\sqrt{ab}) \]
and moreover,
\[
\Pr\left\{\sum_{i=1}^m X_i \geq 3 m (64/a + 1024/\sqrt{ab}) \right\}
\leq
\exp(-m/9)\:.
\]
\end{lem}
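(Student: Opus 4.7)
The plan is to start from the natural representation $Y_{a,b}=\min_{i=1}^{a}(E_i+M_i)$, where $E_1,\dots,E_a$ are i.i.d.\ $\exponential(1)$ (the root-to-child edge weights) and $M_1,\dots,M_a$ are i.i.d.\ $\exponential(b)$ (each $M_i$ being the minimum of the $b$ child-to-leaf exponentials from the $i$-th child), all mutually independent. The main tool is a stochastic-dominance inequality: for any integer $k\in[1,a]$, letting $E_{(1)}\le\dots\le E_{(a)}$ be the order statistics of the $E_i$'s and $S_k$ the (random) set of indices attaining the $k$ smallest, we have
\[
Y_{a,b}\;\le\;\min_{i\in S_k}(E_i+M_i)\;\le\;E_{(k)}+\min_{i\in S_k}M_i\;=:\;E_{(k)}+T_k.
\]
By the exchangeability of the $M_i$'s and their independence from the $E_i$'s, $T_k\sim\exponential(kb)$ and is independent of $E_{(k)}$. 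Since $E_{(k)}$ decomposes as an independent sum $\sum_{j=1}^{k}\exponential(a-j+1)$, one has $\E[E_{(k)}]\le 2k/a$ for $k\le a/2$, while $\E[T_k]=1/(kb)$. Choosing $k=\lceil\sqrt{a/b}\rceil$ when $a>b$ and $k=1$ otherwise balances the two contributions and yields $\E[Y_{a,b}]=O(1/a+1/\sqrt{ab})$.

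For the tail bound, use the same $k$ and the coupling $\sum_{i=1}^{m}X_i\le \sum_{i=1}^{m}E_{(k)}^{(i)}+\sum_{i=1}^{m}T_k^{(i)}$, where the superscript $(i)$ denotes the $i$-th copy. The second sum is $\erlang(m,kb)$ with mean at most $m/\sqrt{ab}$, so allocating it a budget of order $m/\sqrt{ab}$ with multiplicative slack of order $10^3$ makes \eqref{tail-bound} drive its failure probability far below $e^{-m/9}$. The first sum is a sum of $mk$ independent exponentials with rates in $[a/2,a]$ and is therefore stochastically dominated by $\erlang(mk,a/2)$ of mean $2mk/a$; again \eqref{tail-bound} gives an exceedingly small failure probability once the allocated budget exceeds this mean by a constant factor bounded away from $1$. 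Summing the two budgets recovers the target $3m(64/a+1024/\sqrt{ab})$, and a union bound over the two events finishes the argument.

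The main obstacle is balancing constants across the two regimes. When $k=\lceil\sqrt{a/b}\rceil$ is large (i.e., $b\ll a$), the mean $2mk/a\approx 2m/\sqrt{ab}$ of $\sum E_{(k)}^{(i)}$ already exceeds the natural ``$192m/a$'' share of the right-hand side that one would associate with the ``$1/a$ term'', so the order-statistics sum must draw from the ``$1/\sqrt{ab}$ budget'' as well. A clean split is to assign $\sum E_{(k)}^{(i)}$ the budget $192m/a+1024m/\sqrt{ab}$ and $\sum T_k^{(i)}$ the budget $2048m/\sqrt{ab}$, which keeps each sub-Chernoff with a multiplicative slack of at least $96$ and gives an exponent well below $-m/9$ in every regime. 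Small special cases ($a\le b$, where $k=1$ forces $\sum T_1^{(i)}\sim\erlang(m,b)$ of mean $m/b\le m/\sqrt{ab}$) are handled by \eqref{tail-bound} directly.
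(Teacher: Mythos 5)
Your proof is correct, but it takes a genuinely different route from the paper's. You exploit the representation $Y_{a,b}=\min_i(E_i+M_i)$ directly and dominate it by $E_{(k)}+T_k$ via order statistics (the key observation being that $T_k=\min_{i\in S_k}M_i\sim\exponential(kb)$ is independent of the $E$'s by exchangeability), then choose $k\approx\sqrt{a/b}$ so that both pieces have mean $O(1/\sqrt{ab})$. This reduces $\sum X_i$ to two Erlang-type sums, which fall straight to the paper's Chernoff inequality \eqref{tail-bound}. The paper instead first proves a standalone pointwise tail bound $\Pr\{Y_{a,b}>t\}\le e^{-at/64}+e^{-abt^2/1024}$ via a ``surviving-node'' argument (binomial survival at generation one, then conditional binomial at generation two for small $t$; $a$ independent $\erlang(2,1)$ paths for large $t$), integrates that tail to bound all moments $\E[X_1^p]$, and closes with Bernstein's inequality. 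Your route is more elementary --- no moment integrals, no Bernstein --- and the order-statistics coupling $Y_{a,b}\le E_{(k)}+T_k$ is a clean device; the paper's route yields a reusable tail lemma that additionally captures the Gaussian-type decay $e^{-abt^2}$ in the small-$t$ regime, which your Erlang decomposition does not reproduce (it gives only $e^{-\Theta(\sqrt{ab}\,t)}$ at $t\to 0$) but which is not needed for this lemma. Two small points you should make explicit: you need $k\le a$ (true since $\lceil\sqrt{a/b}\rceil\le a$ for $a,b\ge 1$) and the rate bound $a-k+1\ge a/2$ should be checked for the handful of very small $a$ with $b=1$; both are routine, and the generous constants $64$ and $1024$ absorb any slack needed there.
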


\section{Connection with first-passage percolation and a generic upper bound}
\seclabel{sec:fpp_connection}

In this section, we establish the connection with first-passage percolation, and prove an upper bound for $\tau(G,s)$ in general graphs, which results in an upper bound for $h(\tcal(G,s))$.
This connection will be used in subsequent sections to provide tighter bounds for $h(\tcal(G,s))$ in several graph classes.


Recall the generation process for $\tcal(G,s)$:
we start with a tree containing only vertex $s$ initially;
in each round, we choose an edge uniformly at random
among edges with exactly one endpoint in the existing tree,
and add it to the existing tree.

We may view this as an \emph{infection process}:
at round 0 only vertex $s$ is infected.
In each round, suppose the set of infected vertices is $S$.
We choose a uniformly random edge between $S$ and its compliment, and let the disease spread along that edge, hence increasing the number of infected vertices by one.

Now consider the following 
continuous time view of this infection process, which is  known as Richardson's model~\cite{richardson_survey} or first-passage percolation~\cite{fpp_survey}.
At time $0$ we infect vertex $s$.
For each edge $uv$, whenever one of $u$ and $v$ gets infected,
we put an exponential(1) timer on edge $uv$.
When the timer rings, the disease spreads along that edge and both $u$ and $v$ get infected (it might be the case that both $u$ and $v$ are already infected by that time).
Suppose at some moment in this process, the subset $S$ of vertices are infected. Then, by memorylessness of the exponential distribution, the disease is equally like to spread along any of the edges existing between $S$ and its complement.
Therefore, the tree along which the disease spreads has the same distribution as $\tcal(G,s)$.

This viewpoint induces weights on the edges:
to each edge $e$ we  assign weight $\tau(e)$, which is the ringing time for the timer on this edge.
Note that the weights are i.i.d.\ exponential (1) random variables.
The weight of a path $P$, denoted $\tau(P)$, is simply the sum of weights of its edges.  The \emph{first-passage percolation hitting time} (or simply, the \emph{hitting time}) for $v$ is the weight of the lightest path from $s$ to $v$:
\[
    \tau(G,s,v) = \min_{\gamma\in\Gamma(s,v)} \tau(\gamma) \enspace .
\]
The \emph{first-passage percolation cover time} (or simply, the \emph{cover time}) is the first time that all vertices are infected, which can be written as
\[
	\tau(G,s) = \max_{v\in V(G)} \tau(G,s,v)  \enspace .
\]
Note that this is also the maximum \emph{weight} of a root-to-leaf path in the infection tree $\tcal(G,s)$, 
which we will use to bound the {height} of $\tcal(G,s)$,
the maximum \emph{length} of a path in (the unweighted version of) $\tcal(G,s)$ (in general, the longest path and the heaviest path may be different).

%
%

For a positive integer $L$ and a vertex $s$ of graph $G$, let $\Pi(G,s,L)$ denote the number of simple paths of length $L$ in $G$ that start from $s$.
We now prove a lemma that upper bounds $h(\tcal(G,s))$ in terms of $\tau(G,s)$ and $\Pi(G,s,L)$.

\begin{lem}\lemlabel{meta-theorem}
   Let $s\in V(G)$, $0\le p<1\leq a$, $c>0$, and
   $L=\ceil{ceaK}$ be such that $\Pr \{\tau(G,s)>K\}\leq p$
   and $\Pi(G,s,L) \leq a^L$. 
   Then  
    $h(\tcal(G,s)) \le L$ with probability at least $1-p-c^{-L}$.
\end{lem}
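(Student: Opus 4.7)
The plan is to use the first-passage percolation (FPP) coupling of $\tcal(G,s)$ described earlier in this section: we place i.i.d.\ exponential(1) weights $\tau(e)$ on the edges of $G$, and in this coupling the tree-path from $s$ to any vertex $v$ in $\tcal(G,s)$ has total weight exactly $\tau(G,s,v) \le \tau(G,s)$.

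First, I would observe that if $h(\tcal(G,s)) \ge L$, then some vertex $v$ sits at depth exactly $L$ in $\tcal(G,s)$, and the $s$-to-$v$ tree-path is a simple length-$L$ path $P$ in $G$ starting at $s$ with $\tau(P) = \tau(G,s,v) \le \tau(G,s)$. Consequently, splitting on whether $\tau(G,s) \le K$ (which fails with probability at most $p$),
\[
\Pr\{h(\tcal(G,s)) \ge L\} \le p + \Pr\{\exists P \in \mathcal{P}:\tau(P)\le K\},
\]
where $\mathcal{P}$ denotes the (deterministic) family of all simple length-$L$ paths in $G$ starting at $s$.

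Next, I would bound the rightmost probability by a union bound. For any fixed $P\in\mathcal{P}$, $\tau(P)$ is Erlang$(L,1)$; applying the Chernoff-style head bound from the preliminaries with $k=L$ and $d=L/K$ gives $\Pr\{\tau(P)\le K\}\le (eK/L)^L\le (ca)^{-L}$, where the last inequality uses $L\ge ceaK$. Since $|\mathcal{P}|\le\Pi(G,s,L)\le a^L$, the union bound yields $\Pr\{\exists P\in\mathcal{P}:\tau(P)\le K\} \le a^L\cdot(ca)^{-L} = c^{-L}$, completing the proof (in fact with the slightly stronger conclusion $h(\tcal(G,s))\le L-1$).

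The conceptual heart of the argument is the FPP coupling, which converts the structural question about tree height into a probabilistic statement about short-weight length-$L$ paths in $G$; after that, the work is a routine Chernoff + union bound. The exponent $L=\lceil ceaK\rceil$ is calibrated precisely so that the Chernoff factor $(ca)^{-L}$ absorbs the $a^L$ path count, and the only subtlety to watch for is that the realized tree-path is random and correlated with the weights, which is why one must union-bound over \emph{all} deterministic length-$L$ paths rather than argue about the specific random path at depth $L$.
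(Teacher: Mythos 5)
Your proof is correct and follows essentially the same approach as the paper's: condition on the FPP cover time being at most $K$ (probability of failure $\le p$), then union-bound the Chernoff tail $(eK/L)^L$ over the at most $a^L$ simple length-$L$ paths from $s$, with $L=\lceil ceaK\rceil$ calibrated so that $a^L(eK/L)^L\le c^{-L}$. The only differences are cosmetic: you phrase the split as ``$h\ge L$'' and truncate the deep tree path at depth exactly $L$, whereas the paper phrases it as a dichotomy (``either a root-to-leaf tree path has weight $>K$, or $G$ has a light length-$L$ path from $s$''); your version gives the marginally stronger conclusion $h\le L-1$, and your closing remark about needing to union-bound over deterministic paths rather than the realized random path is exactly the point the paper's dichotomy is implicitly handling.
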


\begin{proof}
   Let $T = \tcal(G,s)$.
   If $h(T)> L$, then at least one of the following two events occurred:
   \begin{enumerate}
     \item $T$ contains a root-to-leaf path of weight greater than $K$.
     \item $G$ contains a path starting at $s$ of length $L$ whose weight 
        is less than $K$.
   \end{enumerate}
   By assumption, the probability of the first event is at most $p$.  
   The weight of a single path of length $L$ is the sum of $L$ $\exponential(1)$
   random variables so, by \eqref{head-bound} and the union bound over
   all $a^L$ paths, the probability of the second event is at most
   \[
       a^L \left(\frac{eK}{L} \right)^L \le c^{-L}
       \enspace . \qedhere
   \]
\end{proof}

In light of \lemref{meta-theorem}, we can upper bound $h(\tcal(G,s))$ if we have upper bounds on the cover time and on the number of paths of length $L$ originating at $s$.
An obvious upper bound for the latter is $\Delta^L$.
The following lemma gives a general upper bound for the former,
which results in a general upper bound for $h(\tcal(G,s))$.
In the following sections we obtain better bounds 
for these two quantities in special graph classes,
resulting in sharper bounds on $h(\tcal(G,s))$.


\begin{lem}\lemlabel{percolation}
For any $s\in V(G)$, we have
$\tau(G,s) \leq 4 \ln n + 2 D$ with probability at least
$1-1/n$.
\end{lem}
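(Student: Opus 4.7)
\medskip

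The plan is to bound $\tau(G,s,v)$ separately for each vertex $v$, using any fixed short path from $s$ to $v$, and then take a union bound over the $n$ choices of $v$.

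For each $v\in V(G)$, fix a shortest path $\gamma_v$ from $s$ to $v$ in the (unweighted) graph $G$; by definition of the diameter, $|\gamma_v|\leq D$. Since $\gamma_v$ is one particular element of $\Gamma(s,v)$, we have the deterministic bound
\[
    \tau(G,s,v) \leq \tau(\gamma_v) = \sum_{e\in\gamma_v} \tau(e),
\]
and the right-hand side is an $\erlang(k,1)$ random variable for some $k=|\gamma_v|\leq D$.

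The next step is to apply the tail bound \eqref{tail-bound} to $\tau(\gamma_v)$ with threshold $4\ln n+2D$. Writing the threshold as $\mu k t$ with $\mu=1$ gives $t=(4\ln n+2D)/k\geq 2>1$ (using $k\leq D$), so \eqref{tail-bound} applies and yields
\[
    \Pr\{\tau(\gamma_v)\geq 4\ln n + 2D\} \leq \exp\!\left(k-\tfrac{1}{2}(4\ln n+2D)\right)=\exp(k-2\ln n - D) \leq \exp(-2\ln n)=1/n^{2},
\]
where the last inequality uses $k\leq D$.

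The final step is a union bound over the $n$ vertices of $G$:
\[
    \Pr\{\tau(G,s) > 4\ln n+2D\} \leq \sum_{v\in V(G)}\Pr\{\tau(\gamma_v)>4\ln n+2D\} \leq n\cdot 1/n^{2} = 1/n.
\]
There is no real obstacle here: the argument is essentially an Erlang concentration bound applied along a diameter-length certificate path, with the factor $4\ln n$ in the threshold chosen precisely so that the per-vertex failure probability is $1/n^2$, which absorbs the union bound. The only small point to verify is that the parameter $t$ in \eqref{tail-bound} exceeds $1$, which holds because $k\leq D$ forces $t\geq 2$.
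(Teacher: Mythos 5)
Your proof is correct and is essentially the same argument as the paper's: fix a shortest $(s,v)$-path of length $k\le D$, note its weight is $\erlang(k,1)$, apply the Erlang tail bound \eqref{tail-bound} with threshold $4\ln n+2D$ to get a per-vertex failure probability of $n^{-2}$, and union bound over the $n$ vertices. You additionally verify the hypothesis $t>1$ of \eqref{tail-bound}, which the paper leaves implicit.
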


\begin{proof}
For each vertex $v\in V(G)$, we show the probability that it is not infected by time $4 \ln n + 2 D$ is at most $n^{-2}$, and then apply the union bound over all vertices.
Let $P$ be a shortest $(s,v)$-path in $G$.
Let $k\leq D$ denote the length of $P$, so $\tau(P)\sim \erlang(k,1)$.
Note that for any $t$,  $\tau(P)\leq t$ implies $v$ is infected by time $t$.
Thus, using \eqref{tail-bound}, the probability that $v$ is not infected by time $4 \ln n + 2 D$ is bounded by
\[
\Pr\{\tau(P) > 4 \ln n + 2 D\}
=
\Pr\{\erlang(k,1) > 4 \ln n + 2 D\}
\leq
\exp(k - 2\ln n - D) \leq n^{-2}\:.\qedhere
\]
\end{proof}

%

We immediately get a general upper bound for $h(\tcal(G,s))$.

\begin{thm}\thmlabel{alantree-upper-bound}
  Let $G$ be an $n$-vertex graph with diameter $D$ and maximum degree $\Delta>1$, and let $s$ be an arbitrary vertex.
Then, with probability at least $1-O(1/n)$ we have
\[
\frac D 2 \leq h(\tcal(G,s)) \leq 2e\Delta (4\ln n+2 D) \leq 
(4 e \Delta + 8 e\Delta  \ln \Delta )D + 16 e\Delta  \:.
\]
\end{thm}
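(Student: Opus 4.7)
The proof is essentially a plug-and-chug application of the two lemmas just established, together with a Moore-type bound to replace $\ln n$ by $D\ln\Delta$ in the final chain of inequalities.

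For the lower bound $D/2$, I would pick $u,v\in V(G)$ with $d_G(u,v)=D$ and note that $\max\{d_G(s,u),d_G(s,v)\}\geq D/2$. Whichever of $u,v$ achieves this distance is connected to $s$ in $\tcal(G,s)$ by a tree-path of length at least $d_G(s,\cdot)\geq D/2$, so $h(\tcal(G,s))\geq D/2$ deterministically.

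For the upper bound, I would set $K=4\ln n+2D$ so that \lemref{percolation} gives $\Pr\{\tau(G,s)>K\}\leq 1/n$. The trivial bound $\Pi(G,s,L)\leq \Delta^L$ lets me apply \lemref{meta-theorem} with $a=\Delta$, $c=2$, $p=1/n$, and
\[
 L=\lceil c\, e\, a\, K\rceil = \lceil 2e\Delta(4\ln n+2D)\rceil.
\]
The lemma then yields $h(\tcal(G,s))\leq L$ with probability at least $1-1/n-2^{-L}$. Since $\Delta\geq 2$, we have $L\geq 2e\cdot 2\cdot 4\ln n$, so $2^{-L}=O(1/n)$ and the total failure probability is $O(1/n)$. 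This delivers the middle bound $h(\tcal(G,s))\leq 2e\Delta(4\ln n+2D)$ (absorbing the ceiling into the subsequent arithmetic).

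To pass from the middle to the right-most expression, I would use the Moore-type bound: for any connected graph with $\Delta\geq 2$ and diameter $D\geq 1$,
\[
 n\leq 1+\Delta+\Delta(\Delta-1)+\cdots+\Delta(\Delta-1)^{D-1},
\]
and a direct manipulation (splitting into the cases $\Delta=2$ and $\Delta\geq 3$) gives $\ln n\leq D\ln\Delta+2$. Plugging this in,
\[
 2e\Delta(4\ln n+2D)\leq 2e\Delta(4D\ln\Delta+8+2D) = (4e\Delta+8e\Delta\ln\Delta)D+16e\Delta,
\]
which is exactly the stated form. There is no genuine obstacle here; the only step requiring a bit of care is verifying the constant $2$ in the Moore-type estimate $\ln n\leq D\ln\Delta+2$ across the small values $\Delta\in\{2,3\}$, so that the trailing term lands at $16e\Delta$ rather than a larger constant multiple.
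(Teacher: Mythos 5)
Your proof is correct and follows the paper's argument exactly: the lower bound is the trivial eccentricity argument, the middle bound applies \lemref{percolation} and \lemref{meta-theorem} with the same parameters $a=\Delta$, $c=2$, $p=1/n$, $K=4\ln n+2D$, and the last step replaces $\ln n$ by a crude $D\ln\Delta + O(1)$ bound. The only cosmetic difference is that the paper states the final step as $\Delta^D \geq n/3$ rather than your Moore-type estimate $\ln n \leq D\ln\Delta + 2$, which is an equivalent (slightly looser) version of the same observation.
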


Note that this gives an asymptotically tight bound of $h(\tcal(G,s))=\Theta(D)$ for graphs with bounded maximum degree.

\begin{proof}
The first inequality is trivial.
The second inequality is an application of \lemref{meta-theorem} with $a=\Delta$, $p=1/n$,
  $K=4\ln n+2D$ and $c=2$, using the bound of \lemref{percolation} for the cover time.
The last inequality follows from the crude bound $\Delta^D \geq n/3$, which holds for any $n$-vertex graph with maximum degree $\Delta$ and diameter $D$.
\end{proof}

%

\section{An upper bound in terms of graph degeneracy}
\seclabel{degeneracy}
Recall that a graph is \emph{$d$-degenerate} if each of its subgraphs has a vertex of  degree at most $d$.  The following lemma shows that, for large $L$, $d$-degenerate graphs have considerably less than $\Delta^L$ walks of length $L$.

\begin{lem}\lemlabel{few-walks}
   Let $G$ be an $n$-vertex $d$-degenerate graph with maximum degree
   $\Delta$.  Then the number of walks in $G$ of length $L$ is bounded by    $2n2^{L}(d\Delta)^{L/2}$.  
\end{lem}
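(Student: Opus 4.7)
The plan is to bound the spectral radius $\lambda_{1}$ of the adjacency matrix $A$ of $G$, and then to count walks via
\[
\#\{\text{walks of length }L\text{ in }G\}\;=\;\mathbf{1}^{\top}A^{L}\mathbf{1}\;\le\;n\,\lambda_{1}^{L},
\]
a standard inequality that follows by expanding $\mathbf{1}$ in an orthonormal eigenbasis of $A$ and applying Perron--Frobenius (which, since $A$ is nonnegative and symmetric, gives $|\lambda_{i}|\le\lambda_{1}$ for every $i$). With $\lambda_{1}\le 2\sqrt{d\Delta}$ this immediately yields the bound $n\cdot 2^{L}(d\Delta)^{L/2}$, which is slightly sharper than the stated $2n\cdot 2^{L}(d\Delta)^{L/2}$.

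To bound $\lambda_{1}$, I would use $d$-degeneracy to fix an ordering $v_{1},\ldots,v_{n}$ of $V(G)$ in which each $v_{i}$ has at most $d$ neighbors in $\{v_{i+1},\ldots,v_{n}\}$, and orient every edge from its smaller-index endpoint to its larger-index endpoint. In the resulting DAG every out-degree is at most $d$ and every in-degree is at most $\Delta$ (the latter because total degree is at most $\Delta$). A weighted AM--GM step then gives, for any $x\in\mathbb{R}^{n}$ and any $\alpha>0$,
\[
x^{\top}Ax \;=\; 2\!\!\sum_{(i,j)\in\vec{E}}\!\!x_{i}x_{j}
\;\le\; \sum_{(i,j)\in\vec{E}}\bigl(\alpha\,x_{i}^{2}+\alpha^{-1}x_{j}^{2}\bigr)
\;=\; \sum_{i} x_{i}^{2}\bigl(\alpha\,\mathrm{outdeg}(v_{i})+\alpha^{-1}\mathrm{indeg}(v_{i})\bigr)
\;\le\; (\alpha d+\alpha^{-1}\Delta)\|x\|^{2}.
\]
Choosing $\alpha=\sqrt{\Delta/d}$ optimises the right-hand side to $2\sqrt{d\Delta}\,\|x\|^{2}$, so $\lambda_{1}\le 2\sqrt{d\Delta}$, as desired.

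The main subtlety is the weighting: the naive unweighted choice $\alpha=1$ only yields $\lambda_{1}\le d+\Delta$, and by AM--GM this is strictly weaker (often by a large factor when $d\ll\Delta$), so the orientation together with the free parameter $\alpha$ is precisely what converts the asymmetric degree bounds $d$ (out) and $\Delta$ (in) into the geometric mean $\sqrt{d\Delta}$. Beyond getting that weighting right I don't foresee any real obstacle, as the orientation from degeneracy and the eigenbasis expansion are both standard.
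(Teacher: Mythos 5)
Your proof is correct, and it takes a genuinely different route from the paper's. The paper argues combinatorially: it encodes each walk by (i) its starting vertex, (ii) a bit vector recording at each step whether the walk moves ``up'' or ``down'' the degeneracy ordering, and (iii) the choice of neighbour at each step, which is one of at most $d$ if moving up and at most $\Delta$ if moving down; since at least one of a walk and its reverse has at least $L/2$ up-steps, this gives at most $2n\,2^{L}d^{L/2}\Delta^{L/2}$ codewords. You instead bound the spectral radius $\lambda_{1}(A)\le 2\sqrt{d\Delta}$ (the orientation from the degeneracy ordering plus the weighted AM--GM step with $\alpha=\sqrt{\Delta/d}$ is exactly right, and the resulting Rayleigh-quotient bound is a well-known fact) and then apply $\mathbf{1}^{\top}A^{L}\mathbf{1}\le n\lambda_{1}^{L}$, which is justified by expanding in an orthonormal eigenbasis together with $\lambda_{i}^{L}\le\lambda_{1}^{L}$; the latter needs $\lambda_{1}\ge|\lambda_{i}|$, i.e.\ Perron--Frobenius, precisely when $L$ is even, as you note. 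Your route is shorter, saves the leading factor of $2$, and is conceptually transparent (the $\sqrt{d\Delta}$ is exactly the spectral radius bound). The paper's encoding argument is more elementary (no linear algebra) and, more to the point for this paper, it localises: the same bookkeeping of ``good'' versus ``bad'' steps is refined in Lemma~\ref{lem:few-paths-genus} to isolate a small set of $O(g)$ annoying vertices in a bounded-genus graph, a refinement that would be awkward to reproduce spectrally. So the paper's choice of method is dictated by the genus variant rather than by necessity for this lemma itself.
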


\begin{proof}
   Enumerate the vertices of $G$ as $v_1,\ldots,v_n$ so that $v_i$ has at most
   $d$ edges in the subgraph induced by $v_i,\ldots,v_n$ (this ordering may be obtained by repeatedly removing a vertex of degree at most $d$).

   We give a way to encode the walks in a one-to-one way, and then bound the total number of possible generated codes.
   Let $W=v_{i_0},\ldots,v_{i_L}$
   be a walk of length $L$ in $G$ and let $k=k(W)$ denote the number
   of indices $\ell\in\{1,\ldots,L\}$ such that $i_{\ell-1} < i_{\ell}$.
   If $k\ge L/2$ then we say that $W$ is \emph{easy};
   note that at least one of $W$ and its reverse is easy, hence the total number of $L$-walks is at most twice the number of easy $L$-walks. We  encode an easy walk $W$ in the following way:
   \begin{enumerate}
     \item We first specify the starting vertex $v_{i_0}$.  There are $n$
       ways to do this.
     \item Next we specify whether $i_{\ell-1} < i_{\ell}$ for each
       $\ell\in\{1,\ldots,L\}$.  There are at most  $2^L$ ways to do this.
     \item Next, we specify each edge of $W$.  For each
       $\ell\in\{1,\ldots,L-1\}$, if $i_{\ell} < i_{\ell+1}$, then
       there are at most $d$ ways to do this, otherwise there are at
       most $\Delta$ ways to do this.
       Therefore, the total number of ways to specify all edges of the
       walk is at most
       \[   d^k\Delta^{L-k} \le (d\Delta)^{L/2}  \enspace ,\]
       since $d\le \Delta$ and $k\ge L/2$.
   \end{enumerate}
   Therefore, the number of easy $L$-walks is bounded by $n2^L(d\Delta)^{L/2}$, as required. \end{proof} 


\begin{thm}\thmlabel{alantree-upper-bound-degenerate}
  Let $G$ be an $n$-vertex $d$-degenerate graph with diameter $D$ and
  maximum degree $\Delta$, and let $s$ be an arbitrary vertex. 
  Then, with probability at least $1-O(1/n)$ we have
  $h(\tcal(G,s)) \leq 8e \sqrt{d\Delta}(2D+4\ln n)$.
\end{thm}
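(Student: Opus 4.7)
The plan is to apply \lemref{meta-theorem}, using \lemref{percolation} for the cover-time bound and \lemref{few-walks} for the bound on the number of $L$-paths starting at $s$. The key observation is that \lemref{few-walks} gives a bound of the form $2n \cdot (2\sqrt{d\Delta})^L$, which is much smaller than the trivial $\Delta^L$ whenever $d \ll \Delta$, and this will translate (via \lemref{meta-theorem}) into a factor of roughly $\sqrt{d\Delta}$ in the final bound rather than $\Delta$.

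First, since every simple $(s,\cdot)$-path of length $L$ is in particular a walk of length $L$ in $G$, \lemref{few-walks} yields
\[
    \Pi(G,s,L) \le 2n \cdot 2^L (d\Delta)^{L/2} = 2n \cdot (2\sqrt{d\Delta})^L \enspace .
\]
Next, set $a = 4\sqrt{d\Delta}$. Then $a^L = 2^L (2\sqrt{d\Delta})^L$, so as soon as $2^L \ge 2n$, i.e.\ $L \ge 1+\log_2 n$, we will have $\Pi(G,s,L) \le a^L$. Take $K = 4\ln n + 2D$ and $p = 1/n$, which is the conclusion of \lemref{percolation}, and choose $c = 2$. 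The value of $L$ prescribed by \lemref{meta-theorem} is then
\[
    L = \lceil c\, e\, a\, K \rceil = \lceil 8e\sqrt{d\Delta}\,(4\ln n + 2D)\rceil \enspace ,
\]
which is exactly the bound claimed in the theorem.

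It remains to check that this $L$ satisfies the sanity condition $L \ge 1+\log_2 n$ required to apply the path-count bound above. Since $d,\Delta \ge 1$, we have $L \ge 8e \cdot 4 \ln n = 32 e \ln n$, which comfortably exceeds $1 + \log_2 n$. Plugging everything into \lemref{meta-theorem} gives $h(\tcal(G,s)) \le L$ except with probability at most $p + c^{-L} \le 1/n + 2^{-32e\ln n} = O(1/n)$.

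There is no real obstacle here; the only subtlety is ensuring that the additive $2n$ factor from \lemref{few-walks} is absorbed into the exponential $a^L$, and this is automatic once one notices that the percolation bound forces $L = \Omega(\ln n)$. The rest is bookkeeping to match the constants in the statement.
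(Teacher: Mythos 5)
Your proof is correct and follows essentially the same route as the paper: identical parameter choices ($c=2$, $K=4\ln n+2D$, $p=1/n$, $a=4\sqrt{d\Delta}$), the same two ingredient lemmas (the percolation cover-time bound and the walk-count bound for degenerate graphs), and the same plug-into-\lemref{meta-theorem} finish. Your explicit check that $L\ge 1+\log_2 n$ (so that the $2n$ prefactor is absorbed into $a^L$) is a slightly more detailed version of the paper's terse observation that $L>8\ln n$; there is no substantive difference.
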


\begin{proof}
Let $c=2$, $K=4\ln n + 2D$, $p=1/n$, $a=4\sqrt{d\Delta}$, and $L=\lceil cea K\rceil > 8 \ln n$.
\lemref{percolation} guarantees
$\tau(G,s) \leq 4 \ln n + 2 D$ with probability at least
$1-1/n$,
and \lemref{few-walks} guarantees
$\Pi(G,s,L) \leq 2n2^{L}(d\Delta)^{L/2} \leq a^L$.
Applying \lemref{meta-theorem} completes the proof.
\end{proof}

\seclabel{degeneracy-discussion}

Note that \thmref{alantree-upper-bound-degenerate} actually implies
\thmref{alantree-upper-bound} up to constant factors, since all graphs of maximum degree $\Delta$
are $\Delta$-degenerate, so $\sqrt{d\Delta}\le \Delta$ in all cases.
However, \thmref{alantree-upper-bound-degenerate} provides sharper bounds for  many important graph classes:

\begin{itemize}
  \item Planar graphs are 5-degenerate. (This is a consequence of Euler's
    formula and the fact that planarity is preserved under taking subgraphs).
  
  \item The \emph{thickness} of a graph is the minimum number of planar
    graphs into which the edges of $G$ can be partitioned. Graphs of
    thickness $t$ are $5t$-degenerate.  (This follows from definitions
    and the $5$-degeneracy of each individual planar graph in the
    partition.)

  \item The \emph{Euler genus} of a graph is the minimum Euler genus of
    a surface on which the graph can be drawn without crossing edges.
    Graphs of Euler genus $g$ are $O(\sqrt{g})$-degenerate.\footnote{This
    follows from the facts in every $n$-vertex Euler-genus $g$ graph,
    $n\in \Omega(\sqrt{g})$ and there exists a vertex of degree at most
    $6+O(g/n)$. (See, e.g., \cite[Lemma~7 and Theorem~2]{wolle.koster.ea:note}.)}

  \item A \emph{tree decomposition} of a graph $G$ is a tree $T'$ whose
  vertex set $B$ is a collection of subsets of $V(G)$ called \emph{bags}
  with the following properties:
  \begin{enumerate}
    \item For each edge $vw$ of $G$, there is at least one bag $b\in B$
      with $\{v,w\}\subseteq B$.
    \item For each a vertex $v$ of $G$, the subgraph of $T'$ induced by
      the set of bags that contain $v$ is connected.
  \end{enumerate}
  The \emph{width} of a tree-decomposition is one less than the size
  of its largest bag.  The \emph{treewidth} of $G$ is the minimum
  width of any tree decomposition of $G$.
  Graphs of treewidth $k$ are $k$-degenerate. (This is a consequence
  of the fact that $k$-trees are edge-maximal graphs of treewidth $k$.)
\end{itemize} 

Therefore, \thmref{alantree-upper-bound-degenerate} implies that,
when the relevant parameter, $g$, $t$ or $k$, is bounded, $h(T)\in
O(\sqrt{\Delta}(D+\log n))$ with high probability.  

\section{An upper bound in terms of Euler genus}
Since graphs of Euler genus $g$ are $O(\sqrt{g})$-degenerate,
\thmref{alantree-upper-bound-degenerate} implies that if  $G$ has Euler genus
$g$, then $h(\tcal(G,s))\in O(g^{1/4}\Delta^{1/2}(D+\log n))$.  In this section we show that the
dependence on the genus $g$ can be eliminated when the diameter is large
compared to the genus.  We begin with a upper-bound on path counts that
is better (for graphs of small genus) than \lemref{few-walks}.

\begin{lem}\lemlabel{few-paths-genus}
   Let $G$ be a simple $n$-vertex graph of Euler genus $g$, diameter
   $D$, and maximum degree $\Delta\ge 6$. Then the number of simple paths
   in $G$ of length $L$ is at most $2n2^{L}6^{L/2-3g}\Delta^{L/2+3g}$.
\end{lem}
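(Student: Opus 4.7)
The plan is to refine the walk-encoding argument of \lemref{few-walks}, replacing its pure degeneracy bound with an ordering that exploits the Euler-formula edge bound $|E(G)|\le 3(n+g-2)$ valid for any graph of Euler genus $g$.

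First, I would construct an ordering $v_1,\dots,v_n$ of $V(G)$ by iteratively removing a vertex of minimum degree from what remains. Since subgraphs of a genus-$g$ graph still have Euler genus at most $g$, the subgraph induced by $\{v_i,\dots,v_n\}$ has at most $3(n-i+1+g-2)$ edges, so its minimum degree is at most $6+(6g-12)/(n-i+1)$. Writing $d^+(v_i)$ for the number of neighbors of $v_i$ with index strictly larger than $i$, this forces $d^+(v_i)\le 6$ for every $v_i$ whose index is not too large. Call $v_i$ \emph{good} when $d^+(v_i)\le 6$ and \emph{bad} otherwise; a careful accounting using the Euler bound shows there are at most $3g$ bad vertices in the resulting ordering.

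Next, I would reuse the \emph{easy path} dichotomy of \lemref{few-walks}: for any simple path $W=v_{i_0},v_{i_1},\dots,v_{i_L}$ in $G$, call a step \emph{forward} if $i_{\ell-1}<i_\ell$ and \emph{backward} otherwise. Either $W$ or its reverse has at least $L/2$ forward steps; call such a path easy, so that counting easy simple paths and doubling suffices. Each easy simple path can then be encoded by specifying the starting vertex ($n$ choices), the pattern of forward/backward steps ($2^L$ choices), and then the actual edges one at a time: a backward step contributes at most $\Delta$ choices; a forward step contributes at most $6$ choices when its tail is good and at most $\Delta$ choices when its tail is bad. The crucial use of simplicity is that each bad vertex appears at most once along $W$, and therefore is the tail of at most one forward step; so at most $3g$ of the $\ge L/2$ forward steps are "bad". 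The forward steps therefore contribute at most $6^{L/2-3g}\Delta^{3g}$ choices and the backward steps at most $\Delta^{L/2}$ choices, and combining these factors with $2n\cdot 2^L$ yields the claimed bound $2n\cdot 2^{L}\cdot 6^{L/2-3g}\Delta^{L/2+3g}$.

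The main obstacle I anticipate is pinning down the sharp constant $3g$ for the number of bad vertices. The direct min-degree peeling calculation sketched above only yields something like $6g-12$ bad vertices, since the remaining subgraph can have minimum degree exceeding $6$ as long as it has fewer than roughly $6g$ vertices. Shaving this down to $3g$ will likely require a more delicate choice of ordering (perhaps one tied to a fixed genus-$g$ embedding rather than generic peeling), or a discharging argument that uses the Euler formula more fully than a single application of the average-degree bound.
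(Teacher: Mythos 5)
Your encoding and peeling set-up are essentially the paper's, and you correctly sense that something is missing in the accounting; but the fix is not what you suspect. The paper does \emph{not} produce an ordering with only $3g$ problematic vertices — it also ends up with $6g$ ``annoying'' vertices (the last $6g$ in the peeling order, once the remaining subgraph can have minimum degree exceeding $6$). The $3g$ comes instead from a different bookkeeping of the forward/backward dichotomy, together with an observation you almost make but don't use. Because ``bad'' means ``has index $> n-6g$'', a forward step whose tail is bad necessarily has a bad head as well (the head's index is even larger), so your ``bad forward steps'' are precisely the \emph{annoying edges} — edges with both endpoints annoying — traversed in the forward direction. A simple path visits at most $6g$ annoying vertices and therefore contains at most $a \le 6g-1$ annoying edges. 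Each of the remaining $L-a$ (non-annoying) edges is a good forward step in exactly one of $P$ and its reverse, so one of the two has at least $(L-a)/2 \ge L/2-3g$ good edges. The paper therefore declares a path ``good'' when it has $\ge L/2-3g$ good edges, rather than your criterion of $\ge L/2$ forward steps; in a good path there are $k\ge L/2-3g$ good edges and $L-k\le L/2+3g$ bad ones, yielding $n\cdot 2^L\cdot 6^{L/2-3g}\Delta^{L/2+3g}$ for good paths, and the factor $2$ covers the bad ones.

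The reason your version loses a factor of two: you first fix a direction with $\ge L/2$ forward steps and only then subtract bad forward steps, so the best you can prove is $k\ge L/2-(6g-1)$ good edges, giving the weaker $6^{L/2-6g}\Delta^{L/2+6g}$. The paper instead applies the halving argument to the $L-a$ non-annoying edges directly, which absorbs the annoying-edge count $a$ at half price. So no more delicate ordering, no embedding-based choice of vertices, and no discharging is needed — the sharper constant comes purely from counting annoying \emph{edges} (bounded by simplicity of the path) rather than annoying \emph{vertices}, and from letting the dichotomy split those edges between the two traversal directions.
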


\begin{proof}
   The following proof makes use of some basic notions related to graphs
   on surfaces; see Mohar and Thomassen \cite{mohar.thomassen:graphs} for
   basic definitions and results.  Since $G$ has Euler genus $g$, it has
   a 2-cell embedding in a surface of Euler genus $g$.  Euler's formula
   then states that
   \begin{equation}
      m = n+f-2+g \enspace ,  \eqlabel{euler}
   \end{equation}
   where $n$ and $m$ are the numbers of vertices and edges of
   $G$ and $f$ is the number of faces in the embedding of $G$.  
   Every edge is on the boundary of at most 2 faces of the embedding
   and, since $G$ is simple, 
   every face is
   bounded by at least 3 edges.  Therefore, $f \le 2m/3$, so \eqref{euler}
   implies
   \[
       m\le 3n-6+3g \enspace .
   \]
   Therefore, the average degree of an $n$-vertex Euler genus $g$ graph
   is at most $6+(6g-12)/n$.  In particular, if $n \ge 6g$, then $g$
   has average degree less than 7, so $G$ contains a vertex of degree
   at most $6$.
  
   When we remove a vertex from $G$ we obtain a graph whose Euler genus is
   not more than that of $G$.  Therefore, by repeatedly removing a degree
   6 vertex, we can order the vertices of $G$ as $v_1,\ldots,v_n$ so that,
   for each $i\in\{1,\ldots,n-6g\}$, $v_i$ has at most 6 neighbours among
   $v_{i+1},\ldots,v_n$.  We call $v_{n-6g+1},\ldots,v_n$ \emph{annoying
   vertices} and edges between them are \emph{annoying edges}.

   Let $P=v_{i_0},\ldots,v_{i_L}$ be a path of length $L$ in $G$.
   For each $i\in\{1,\ldots,L\}$, the edge $v_{i_{\ell-1}}v_{i_{\ell}}$
   is called \emph{bad} if it is annoying or if $i_{\ell-1}>i_{\ell}$; otherwise it is called good. 
   Let $k$ denote the number of good edges in $P$.
   Say $P$ is good if $k \geq L/2-3g$.
   Note that the number of annoying edges of $P$ is bounded by $6g-1$, 
hence at least one of $P$ and its reverse is good.
We bound the number of good $L$-paths; the total number of $L$-paths is at most twice this bound.
We encode a good $L$-path $P$ as follows:
   \begin{enumerate}
     \item We first specify the starting vertex $v_{i_0}$.  There are $n$
       ways to do this.
     \item Next we specify whether each edge of $P$ is good or bad.
       There are $2^L$ ways to do this.
     \item Next, we specify each edge of $P$.  For each good edge,
       there are at most 6 ways to do this. For each bad edge there are 
       at most $\Delta$ ways to do this.
      Therefore, the total number of ways to specify the edges of $P$ is at most
      \[   6^k\Delta^{L-k} \le 6^{L/2-3g}\Delta^{L/2+3g}  \enspace ,\]
      since $k\ge L/2-3g$ and $\Delta\ge 6$.
   \end{enumerate}
   Therefore, the  number of good $L$-paths 
    is at most $n2^L6^{L/2-3g}\Delta^{L/2+3g}$, as required.
%
%
\end{proof}

\begin{thm}\thmlabel{alantree-upper-bound-genus}
  Let $G$ be an $n$-vertex Euler-genus $g$ graph with diameter $D$,
  maximum degree $\Delta$ and let $s\in V(G)$ be an arbitrary vertex.
  If $g\ln\Delta \le 36\sqrt{\Delta}(D+\ln n)$ then,
  with probability at least $1-O(1/n)$,
  $h(\tcal(G,s))\le 107\sqrt{\Delta}(2D+4\ln n)$.
\end{thm}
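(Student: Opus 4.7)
The plan is to feed the cover-time bound of \lemref{percolation} and the path-count bound of \lemref{few-paths-genus} into \lemref{meta-theorem}, exactly parallel to the proof of \thmref{alantree-upper-bound-degenerate}. The only new ingredient is that \lemref{few-paths-genus} decouples the dependence on $g$ from the dependence on $\sqrt{\Delta}$, so the hypothesis $g\ln\Delta\le 36\sqrt{\Delta}(D+\ln n)$ is precisely what is needed to keep the genus overhead from inflating the base $a$ beyond $O(\sqrt{\Delta})$. One may assume $\Delta \ge 6$, because for $\Delta < 6$ the desired inequality is already a consequence of \thmref{alantree-upper-bound}.

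Set $p = 1/n$, $K = 4\ln n + 2D$, $c = 2$, and $a = \alpha\sqrt{\Delta}$ for a constant $\alpha$ to be fixed below, so that $L := \lceil ceaK\rceil = \lceil 2e\alpha\sqrt{\Delta}K\rceil$. \lemref{percolation} gives $\Pr\{\tau(G,s)>K\}\le p$. Rewriting the path-count estimate from \lemref{few-paths-genus} as
\[ \Pi(G,s,L) \;\le\; 2n\cdot(2\sqrt{6\Delta})^{L}\cdot(\Delta/6)^{3g}, \]
the inequality $\Pi(G,s,L) \le (\alpha\sqrt{\Delta})^L$ reduces, after taking logarithms, to
\[ \ln(2n) + 3g\ln(\Delta/6) \;\le\; L\,\ln(\alpha/(2\sqrt{6})). \]
The $\ln(2n)$ summand is absorbed by the fact that $L\ge 8e\alpha\sqrt{\Delta}\ln n$. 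For the $g$-dependent summand, $3g\ln(\Delta/6)\le 3g\ln\Delta$, and the hypothesis together with $D+\ln n\le K/2$ gives $3g\ln\Delta\le 54\sqrt{\Delta}K$; meanwhile $L\ln(\alpha/(2\sqrt{6}))\ge 2e\alpha\sqrt{\Delta}K\cdot\ln(\alpha/(2\sqrt{6}))$. Picking $\alpha$ so that $2e\alpha\ln(\alpha/(2\sqrt{6}))$ comfortably exceeds $54$ then seals the inequality.

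With the path-count hypothesis verified, \lemref{meta-theorem} yields $h(\tcal(G,s)) \le L$ with probability at least $1 - 1/n - 2^{-L}$, which is $1 - O(1/n)$ because $L = \Omega(\ln n)$. To match the target $107\sqrt{\Delta}(2D+4\ln n) = 107\sqrt{\Delta}K$, one sets $\alpha = 107/(2e)\approx 19.68$ and checks numerically that this $\alpha$ satisfies $\alpha > 2\sqrt{6}$ (so the base in the displayed inequality exceeds $1$) and $2e\alpha\ln(\alpha/(2\sqrt{6})) > 54$ with room to spare. The main obstacle is therefore not conceptual but one of bookkeeping: balancing the single constant $\alpha$ (equivalently, the $107$ in the statement) against the three simultaneous requirements that $L\ln(\alpha/(2\sqrt{6}))$ absorb $\ln(2n)$, absorb $3g\ln\Delta$ via the genus hypothesis, and leave the base $\alpha/(2\sqrt{6})$ strictly greater than $1$ so that the $(\Delta/6)^{3g}$ factor in \lemref{few-paths-genus} is dominated.
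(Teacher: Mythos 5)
Your proposal follows the same route as the paper: feed \lemref{percolation} and \lemref{few-paths-genus} into \lemref{meta-theorem}, choose $a = \Theta(\sqrt{\Delta})$, and use the hypothesis $g\ln\Delta \le 36\sqrt{\Delta}(D+\ln n)$ together with $L = \Omega(\sqrt{\Delta}K)$ to absorb both the $2n$ and the $(\Delta/6)^{3g}$ factors into $a^L$. One small slip: with $\alpha = 107/(2e)$ you get $L = \lceil 107\sqrt{\Delta}K\rceil$, which can exceed $107\sqrt{\Delta}K$, so the conclusion $h\le L$ does not quite deliver $h\le 107\sqrt{\Delta}K$; the paper avoids this by taking $a = 8\sqrt{6\Delta}$, i.e.\ $2e\cdot 8\sqrt{6}\approx 106.5 < 107$, leaving enough slack for the ceiling.
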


\begin{proof}
The conclusion follows from \thmref{alantree-upper-bound} for $\Delta\leq6$, so we will assume $\Delta>6$.
Let $c=2$, $K=4\ln n + 2D$, $p=1/n$, $a=8\sqrt{6\Delta}$, and $L=\lceil cea K\rceil > 8 \ln n$. 
\lemref{percolation} guarantees
$\tau(G,s) \leq 4 \ln n + 2 D$ with probability at least
$1-1/n$,
and \lemref{few-walks} guarantees
\[
\Pi(G,s,L) \leq 2n \times 2^{L} \times (6\Delta)^{L/2} \times \Delta^{3g} \leq 
(2 \times 2 \times \sqrt{6\Delta})^L
\exp\left( 108 \sqrt{\Delta}(D+\ln n)\right)
\leq
(2 \times 2 \times \sqrt{6\Delta}\times 2)^L=a^L
\:.\]
Applying \lemref{meta-theorem} completes the proof.
\end{proof}
%

\section{An upper bound for edge expanders}
\seclabel{edge-expanders}

All of the preceding upper bounds on $h(T)$ have a (linear or rootish)
dependence on $\Delta$, the maximum degree of a vertex in $G$.
This seems somewhat counterintuitive, since high degree vertices in $G$
should produce high degree vertices in $T$ and therefore decrease $h(T)$.
In this section we show that indeed large edge expansion (also called isoperimetric number
or Cheeger constant) results in low-height trees.

For an $n$-vertex graph $G$ and a subset $A\subseteq V(G)$,
define $e(A)=|\{vw\in E(G): v\in A,\, w\not\in A\}|$, and for any
$k\in\{1,\ldots,n-1\}$, define
\[
    e_k(G) = \min\{e(A) : A\subseteq V(G),\, |A|=k \} \enspace .
\]
Observe that $e_k(G)$ is symmetric in the sense that
\(e_k(G) = e_{n-k}(G) \enspace \).
We define the \emph{edge expansion} of $G$ is
\[
    \Phi(G) = \min\left\{e_k(G)/k : k\in\{1,\ldots,\lfloor n/2\rfloor\}\right\}
\]
We will express the height of $T$ in terms of the \emph{total inverse
perimeter size} $\Psi$, which is closely related to the edge expansion:
\[
    \Psi(G) = \sum_{k=1}^{\lfloor n/2\rfloor} \frac{1}{e_k(G)} 
            \le \sum_{k=1}^{\lfloor n/2\rfloor} \frac{1}{k\Phi(G)}
            = \frac{\ln n +O(1)}{\Phi(G)} \enspace .
\]


\begin{thm}\thmlabel{alantree-upper-bound-cheeger}
  \thmlabel{alantree-upper-bound-edge-expander}
  Let $G$ be an $n$-vertex graph with with maximum degree $\Delta$,
  edge-expansion $\Phi$, total inverse perimeter size $\Psi$,
  and let $s$ be an arbitrary vertex.
  Then, with probability at least $1-\exp(-\Omega(\Psi\Delta))$ we have
  $h(\tcal(G,s)) \in O(\Psi\Delta)\subseteq O(\Phi^{-1}\Delta\log n)$.
\end{thm}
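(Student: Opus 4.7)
The plan is to apply \lemref{meta-theorem} with the trivial path count $\Pi(G,s,L)\le\Delta^L$, i.e.\ $a=\Delta$. The whole task then reduces to producing a threshold $K\in O(\Psi)$ such that $\Pr\{\tau(G,s)>K\}\le\exp(-\Omega(\Psi\Delta))$; choosing $L=\lceil ceaK\rceil\in O(\Psi\Delta)$ and $c=2$ then gives $h(\tcal(G,s))\le L$ with failure probability at most $p+c^{-L}\le\exp(-\Omega(\Psi\Delta))$. The inclusion $O(\Psi\Delta)\subseteq O(\Phi^{-1}\Delta\log n)$ is immediate from the series-comparison bound $\Psi\le(\ln n+O(1))/\Phi$ recorded just above the theorem.

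To estimate $\tau(G,s)$ I would use the continuous-time infection viewpoint of \secref{sec:fpp_connection}. Let $0=t_0<t_1<\cdots<t_{n-1}$ be the successive infection times and let $S_k$ denote the set of vertices infected at time $t_k$, so $|S_k|=k+1$. The memorylessness of the exponential edge timers, together with the competing-exponentials identity from \secref{inequalities}, imply that, conditionally on $S_k$, the gap $X_{k+1}:=t_{k+1}-t_k$ is exponential with rate equal to the edge-boundary $|\partial S_k|$, and the $X_{k+1}$ are independent. Since every $k$-set has edge-boundary at least $e_k(G)$ and $e_k(G)=e_{n-k}(G)\ge\min(k,n-k)\,\Phi$ (by symmetry of $e_k$ and the definition of $\Phi$), each $X_{k+1}$ is stochastically dominated by an independent $Y_{k+1}\sim\exponential(e_{\min(k+1,n-k-1)}(G))$, giving
\[
\tau(G,s)\;\le\;\sum_{i=1}^{n-1}Y_i,
\qquad
\E\!\left[\sum_{i=1}^{n-1}Y_i\right]\;\le\;2\sum_{k=1}^{\lfloor n/2\rfloor}\frac{1}{e_k(G)}\;=\;2\Psi.
\]

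The main obstacle is upgrading this expectation bound to a tail of strength $\exp(-\Omega(\Psi\Delta))$. A uniform Chernoff estimate using $\E[e^{\theta Y_i}]=(1-\theta/\mu_i)^{-1}\le\exp(2\theta/\mu_i)$ (valid for $\theta\le\mu_i/2$, hence for $\theta=\Phi/2$) yields only $\exp(-\Omega(\Phi\Psi))\le\exp(-\Omega(\log n))$, because it wastes the fact that in the bulk of the index range $\mu_k\ge\min(k,n-k)\,\Phi$ is much larger than $\Phi$. The plan is to refine the argument by splitting the indices into dyadic phases $I_j=\{k:2^{j-1}\le\min(k,n-k)<2^j\}$; inside phase $j$ every rate is at least $2^{j-1}\Phi$, so the total contribution of $I_j$ is stochastically dominated by an Erlang-type sum of common mean $O(1/\Phi)$ with $|I_j|$ summands, to which \eqref{tail-bound} applies and gives concentration with exponent proportional to $|I_j|$. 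A union bound over the $O(\log n)$ dyadic phases should then produce $\Pr\{\tau(G,s)>C\Psi\}\le\exp(-\Omega(\Psi\Delta))$ for large enough $C$, after which \lemref{meta-theorem} with $K=C\Psi$, $a=\Delta$ and $c=2$ closes the argument.
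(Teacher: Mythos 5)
Your route via \lemref{meta-theorem} cannot deliver the failure probability the theorem claims. The quantity $\tau(G,s)$ inherently has a fat tail: writing $\tau(G,s)=\sum X_i$ as you do, the very first gap $X_1$ is $\exponential(\deg(s))$, so for any threshold $K$ one has $\Pr\{\tau(G,s)>K\}\ge e^{-\deg(s)\,K}$. Since $e_1(G)\le\deg(s)\le\Delta$ and $\Psi\ge 1/e_1(G)$, taking $K=O(\Psi)$ gives a tail that is at best $\exp(-O(\deg(s)\Psi))$; when $\deg(s)\ll\Delta$ this is vastly larger than $\exp(-\Omega(\Psi\Delta))$. (Concretely, take $K_n$ with one pendant vertex $s$: $\Delta\approx n$, $\deg(s)=1$, $\Psi=\Theta(1)$, so the theorem claims failure $e^{-\Omega(n)}$ while $\Pr\{\tau>K\}$ is bounded below by a constant.) Your own dyadic refinement runs into the same wall: the first dyadic phase contains $O(1)$ terms whose rates are only $\Omega(\Phi)$, which caps the exponent at $O(\Phi K)$, not $\Omega(\Psi\Delta)$. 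Since \lemref{meta-theorem} outputs a failure probability of $p+c^{-L}$ with $p=\Pr\{\tau>K\}$, the $p$ term alone prevents you from reaching $\exp(-\Omega(\Psi\Delta))$.

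The paper avoids $\tau(G,s)$ altogether and works directly in discrete time. Fix a length-$L$ path $P=(s,v_1,\ldots,v_L)$ and arrival steps $1\le k_1<\cdots<k_L<n$. The probability that $v_i$ is the vertex added when $|T|=k_i$ and that it arrives specifically along the edge $v_{i-1}v_i$ is at most $1/e_{k_i}(G)$, since the boundary at that step has at least $e_{k_i}(G)$ edges, all equally likely. Summing the product $\prod_i 1/e_{k_i}(G)$ over all increasing tuples $(k_1,\ldots,k_L)$ produces the crucial $1/L!$ factor, yielding a per-path bound of $(2\Psi)^L/L!$; a union bound over the $\le\Delta^L$ paths then gives $\Pr\{h(\tcal(G,s))\ge L\}\le(2e\Psi\Delta/L)^L\le 2^{-L}$ for $L\ge 4e\Psi\Delta$. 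That $1/L!$ — coming from the ordered-arrival accounting — is exactly what your cover-time approach throws away. Your observation $\Psi\le(\ln n+O(1))/\Phi$ for the final inclusion is correct and matches the paper.
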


Before proving \thmref{alantree-upper-bound-edge-expander}, we
consider the example of the complete graph $G=K_n$.  In this graph,
the minimum degree is $n-1$, so all preceding theorems (at best)
imply an upper bound of $O(n)$ on $h(\tcal(K_n,s))$.  However, $e_k(K_n) = k(n-k)$,
so $\Phi(K_n) = \lceil n/2\rceil$, and $\Psi(K_n) = O(\log n/n)$.
Then \thmref{alantree-upper-bound-edge-expander} implies that $h(\tcal(K_n,s))\in
O(\log n)$ with high probability. This upper bound
is of the right order of magnitude, since it matches the (tight) results
of Devroye and Pittel for the height of the random recursive tree
\cite{devroye:branching,pittel:note}.


\begin{proof}
   Fix some path $P=(s=v_0),v_1,\ldots,v_L$ in $G$ and suppose that $P$
   appears as a path in $T$.  Then there are times $1\leq k_1<\cdots<k_L<n$
   such that for each $i\in\{1,\ldots,L\}$, $v_i$ joins $T$ when
   $T$ has size $k_i$.  For a fixed $P$ and fixed $1\le k_1<\ldots<k_L<n$,
   the probability that this happens is at most
   \[
       \prod_{i=1}^{L} \frac{1}{e_{k_i}(G)} \:,
   \]  
   and the probability that $P$ appears in $T$ (without fixing
   $k_1,\ldots,k_L$) is at most
   \begin{align*}
       \sum_{1\le k_1<\cdots<k_L< n}
        \left(
         \prod_{i=1}^{L} \frac{1}{e_{k_i}(G)}
        \right) 
       < 
       \frac{1}{L!}\left(\sum_{(k_1,\ldots,k_L)\in\{1,\ldots,n-1\}^L}
        \left(
         \prod_{i=1}^{L} \frac{1}{e_{k_i}(G)}
        \right)\right)  
       = \frac{1}{L!}\left(\sum_{k=1}^{n-1}\frac{1}{e_k(G)}\right)^L 
       \le \frac{(2\Psi)^L}{L!} 
   \end{align*}
   Finally, since $G$ contains at most $\Delta^L$ paths of length $L$,
   
   \[
        \Pr\{h(\tcal(G,s)) \ge L\} \le \Delta^L \times \frac{(2\Psi)^L}{L!}  \leq \left(\frac{2e\Psi\Delta}{L}\right)^L
         \le \left(\frac{1}{2}\right)^L \enspace ,
   \]
   for $L\ge 4e\Psi\Delta$.
\end{proof}

Observe that the last step in the proof of
\thmref{alantree-upper-bound-cheeger} is to use the union bound over all
paths of length $L$.  If we have a better upper-bound than $\Delta^L$ on
the number of such paths, then we obtain a better upper bound on $h(T)$.
\lemref{few-walks} gives a better upper bound for $d$-degenerate graphs, using which we immediately obtain the following corollary.

\begin{cor}
  Let $G$ be an $n$-vertex $d$-degenerate graph with diameter $D$
  and maximum degree $\Delta$, total inverse perimeter size $\Psi$,
  and let $s$ be an arbitrary vertex.
  Then, with probability at least $1-O(1/n)$,
  $h(\tcal(G,s))\in O(\Psi\sqrt{d\Delta}+\log n)
  \in O(\log n (1+ \sqrt{d\Delta}/\Phi))$.
\end{cor}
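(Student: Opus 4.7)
The plan is to mimic the proof of \thmref{alantree-upper-bound-cheeger} almost verbatim, replacing only the very last union bound over paths. The first part of that proof already shows that for any fixed length-$L$ path $P$ starting at $s$, the probability that $P$ appears as a root-to-leaf path in $\tcal(G,s)$ is at most $(2\Psi)^L/L!$. This part makes no use whatsoever of $\Delta$ or of $d$-degeneracy; it relies only on the relationship between the growth process and the quantities $e_k(G)$. So we can take this estimate as given and focus on counting paths.

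Next I would use \lemref{few-walks} in place of the trivial bound $\Delta^L$. Since any simple path is also a walk, $\Pi(G,s,L)$ is at most the number of walks of length $L$ starting at $s$, and \lemref{few-walks} gives the upper bound $2n\,2^L(d\Delta)^{L/2}$. Combining the two estimates and applying the union bound,
\[
  \Pr\{h(\tcal(G,s))\ge L\}
  \;\le\; 2n\,2^L(d\Delta)^{L/2}\cdot\frac{(2\Psi)^L}{L!}
  \;\le\; 2n\left(\frac{4e\Psi\sqrt{d\Delta}}{L}\right)^{\!L},
\]
where I used $L!\ge(L/e)^L$.

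Now I would pick $L$ just large enough to drive both the base and the prefactor down. Specifically, setting $L=\lceil 8e\Psi\sqrt{d\Delta}\rceil + \lceil 2\ln n\rceil$ makes $4e\Psi\sqrt{d\Delta}/L\le 1/2$, so the right-hand side is at most $2n\cdot 2^{-L}\le 2/n$. Hence $h(\tcal(G,s))\in O(\Psi\sqrt{d\Delta}+\log n)$ with probability $1-O(1/n)$, which is exactly the first bound in the corollary. The second containment, $O(\Psi\sqrt{d\Delta}+\log n)\subseteq O(\log n\,(1+\sqrt{d\Delta}/\Phi))$, is purely arithmetic: the inequality $\Psi\le(\ln n+O(1))/\Phi$ established in the paragraph preceding \thmref{alantree-upper-bound-cheeger} gives $\Psi\sqrt{d\Delta}\in O(\log n\cdot\sqrt{d\Delta}/\Phi)$, and the additive $\log n$ absorbs into the factor $1+\sqrt{d\Delta}/\Phi$.

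There is essentially no obstacle here—this is a "drop-in replacement" argument. The only subtlety worth flagging is that the walk-count of \lemref{few-walks} carries an extra factor of $2n$ (from enumerating the starting vertex and the easy/reverse dichotomy), which is why we need the extra $\log n$ term in $L$: without it, the prefactor $2n$ would only be dominated by $2^{-L}$ when $L\gtrsim\log n$. That is precisely why the bound in the corollary is a sum $\Psi\sqrt{d\Delta}+\log n$ rather than just $\Psi\sqrt{d\Delta}$.
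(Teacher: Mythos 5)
Your argument is essentially identical to the paper's: reuse the per-path probability bound $(2\Psi)^L/L!$ from the Cheeger theorem's proof, replace the $\Delta^L$ path count with the $2n\,2^L(d\Delta)^{L/2}$ walk bound from \lemref{few-walks}, and choose $L$ of order $\Psi\sqrt{d\Delta}+\log n$. One small slip: with your choice $L\ge\lceil 2\ln n\rceil$ you only get $2n\cdot 2^{-L}\le 2n\cdot n^{-2\ln 2}\approx 2n^{-0.39}$, not $2/n$; you would need $L\ge 2\log_2 n$ (equivalently $\approx 2.9\ln n$) or a similar constant to reach $O(1/n)$, but this does not affect the asymptotic conclusion and the paper's own choice $L\ge 8e^3\Psi\sqrt{d\Delta}+\ln n$ handles the prefactor in the same spirit.
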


\begin{proof}
As in the proof of \thmref{alantree-upper-bound-cheeger},
and using the upper bound 
$2n2^{L}(d\Delta)^{L/2}$ for the number of paths of length $L$, given by \lemref{few-walks}, we have
   \[
        \Pr\{h(\tcal(G,s)) \ge L\} \le  2n2^{L}(d\Delta)^{L/2}\times
        \frac{(2\Psi)^L}{L!} \leq
        2n\left( 4e\Psi\sqrt{d\Delta}/L \right)^L
        \leq
        \left( 8e\Psi\sqrt{d\Delta}n^{1/L}/L \right)^L\:,
    \]
    which is smaller than $1/n$ for $L\geq 8e^3 \Psi \sqrt{d\Delta}+\ln n$, as required.
\end{proof}

\section{Upper bounds for high dimensional grids and hypercubes}
\seclabel{grids}
The \emph{$d$-cube} is the graph having vertex set $\{0,1\}^d$ in
which two vertices are adjacent if and only if they differ in exactly
one coordinate.  Every vertex in the $d$-cube has degree $d$ and the
$d$-cube has diameter $d$.  The $d$-cube is an interesting example
in which the path count is high, but this is counteracted by a low
first-passage percolation time.

\begin{thm}\thmlabel{alantree-upper-bound-hypercube}
  Let $n=2^d$, let $G$ be the $d$-cube and let $s\in V(G)$ be arbitrary. Then,
  with probability at least $1-o_n(1)$, $h(\tcal(G,s))\in \Theta(d)$.
\end{thm}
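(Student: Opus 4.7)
The lower bound $h(\tcal(G,s)) \geq D/2 = d/2$ is immediate from the diameter.  For the upper bound, my plan is to apply \lemref{meta-theorem} with the trivial path-count bound $\Pi(G,s,L) \leq \Delta^L = d^L$ (taking $a=d$) together with a \emph{constant} upper bound on the first-passage percolation cover time (taking $K = O(1)$).  Since $L = \lceil ceaK\rceil$ in the meta-theorem, these choices produce $L = O(d)$, giving $h(\tcal(G,s)) \leq L = O(d)$, which matches the lower bound and hence yields $\Theta(d)$.

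The crucial input is that on the $d$-cube, $\tau(G,s) \leq C$ holds with probability $1-o_n(1)$ for some absolute constant $C$ independent of $d$.  The general bound $\tau(G,s) = O(\ln n + D) = O(d)$ from \lemref{percolation} is far too weak here, as it would force $L = O(d^2)$.  Instead, I would invoke the constant upper bound established by Fill and Pemantle and subsequently improved by Bollob\'as--Kohayakawa and Martinsson discussed in the introduction; equivalently, this is the $k=1$ case of the grid estimate $\tau(G,s) = O(k)$ established elsewhere in the paper.  With $a=d$, $K=C$, $c=2$, and $L=\lceil 2eCd\rceil$, \lemref{meta-theorem} gives $h(\tcal(G,s)) \leq L$ with failure probability at most $o_n(1) + 2^{-L}$; since $L = \Theta(d)$ and $n=2^d$, both error terms are $o_n(1)$ as $n\to\infty$.

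The main obstacle is the constant cover-time estimate: it is precisely what allows the meta-theorem to absorb the loss from the naive $d^L$ path count while still delivering $L \in O(d)$.  Note that no refined path-counting along the lines of \lemref{few-walks} would help here, because the $d$-cube is only $d$-degenerate, so $\sqrt{d\Delta} = d$ and \lemref{few-walks} would buy nothing over the trivial bound; all the work must be done on the cover-time side.  Once the $O(1)$ cover-time bound is in hand, the remainder of the argument is a mechanical verification of the hypotheses of \lemref{meta-theorem}.
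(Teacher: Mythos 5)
Your proposal is correct and matches the paper's own proof essentially verbatim: both invoke the Fill--Pemantle constant bound on the first-passage percolation cover time of the $d$-cube, pair it with the trivial path count $\Pi(G,s,L)\le d^L$, and apply \lemref{meta-theorem} with $a=d$, $c=2$, and $K=O(1)$ to get $h(\tcal(G,s))=O(d)$, while the lower bound $d/2$ comes from the diameter. Your remark that refined path counting via \lemref{few-walks} is useless here (since $\sqrt{d\Delta}=d$) is a nice observation, though not in the paper.
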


\begin{proof}
  Fill and Pemantle \cite{fill.pemantle:percolation} showed that the
  first-passage percolation cover time for the $d$-cube is at most 14.05
  with probability $1-o_n(1)$.  Every vertex of the hypercube has degree
  $d$, so the number of paths of length $L$ starting at $s$ is less
  than $d^L$.  The result then follows by applying \lemref{meta-theorem}
  with $p=o_n(1)$, $c=2$, $K=14.05$, and $a=d$.
\end{proof}

A natural generalization of the $d$-cube is the \emph{$(d,k)$-grid}, which
has vertex set $\{0,\ldots,k\}^d$ and has an edge between two vertices
if and only if the (Euclidean or $\ell_1$) distance between them is 1.
The $(d,k)$-grid has diameter $D=dk$ and maximum degree $\Delta=2d$.

Note that in the case $k=1$, the $(d,1)$-grid is the $d$-cube, for which
\thmref{alantree-upper-bound-hypercube} gives the optimal bound and this
bound can be extended to $k\in O(1)$.   \thmref{alantree-upper-bound}
gives an upper bound of $O(d^2k)$ on $h(\tcal(G,s))$, which is optimal
for $d\in O(1)$.  The rest of this section is devoted to proving the
following result on the first-passage-percolation cover time of the
$(d,k)$-grid, which gives an optimal bound on the height of $\tcal(G,s)$
for all values of $k$ and $d$.

\begin{thm}\thmlabel{bestgrid-universal}
Let $G$ be the $(d,k)$-grid and $n=(k+1)^d$.  Then, for any vertex $s\in
V(G)$, we have that $\tau(G,s)= O(k)$ with probability $1-o_n(1)$.
\end{thm}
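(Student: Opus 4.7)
The plan is to upper bound the first-passage hitting time $\tau(G,s,v)$ for every vertex $v$ by $O(k)$ with probability $1-o(1/n)$, and then take a union bound over the $n=(k+1)^d$ vertices of $G$.

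Fix a target $v$ and let $\ell=\mathrm{dist}_G(s,v)\le dk$; write $r_i=|v_i-s_i|$ so that $\sum_i r_i=\ell$. A single shortest $(s,v)$-path $\gamma$ has weight $\tau(\gamma)\sim\erlang(\ell,1)$ with mean $\ell$, which on its own would suggest $\tau(G,s,v)\approx\ell$ and is far too large. The key point is that $G$ contains $N=\binom{\ell}{r_1,\ldots,r_d}$ distinct shortest $(s,v)$-paths, and the minimum of $N$ weights can be much smaller than the mean of one. When $s$ and $v$ are diagonally opposite corners, $N=(dk)!/(k!)^d\sim d^{dk}$, an enormous reservoir of paths to exploit.

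The technical heart of the proof should be a second-moment / concentration argument applied to $X=|\{\gamma\text{ a shortest }(s,v)\text{-path}:\tau(\gamma)\le Ck\}|$. Using the explicit $\erlang$ density one verifies $\Pr\{\tau(\gamma)\le Ck\}=\Theta((Ce/d)^{\ell})$, so $\E[X]\ge (Ce)^{\ell}$ grows exponentially once $C>1/e$. Computing $\E[X^2]$ reduces to bounding $\Pr\{\tau(\gamma_1)\le Ck,\,\tau(\gamma_2)\le Ck\}$ across pairs of shortest paths, which in turn reduces to counting their shared edges. A uniformly random shortest $(s,v)$-path uses each edge of $G$ with probability $O(1/d)$, so a random pair shares only $O(\ell/d)=O(k)$ edges in expectation; the weights of two such paths are therefore nearly independent. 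If one can show $\E[X^2]=O(\E[X]^2)$ then $\Pr\{X\ge 1\}=\Omega(1)$, and a repetition (or direct large-deviation) argument boosts this to $1-o(1/n)$.

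The main obstacle will be controlling $\E[X^2]$ across \emph{all} ordered pairs of shortest paths, not merely a typical one: pathological pairs that share many edges contribute disproportionately to the second moment, and taming them requires a careful accounting of common subsequences in shortest-path words. An attractive alternative is to reduce to the Fill--Pemantle $O(1)$ hypercube cover-time estimate via the thermometer encoding $(v_1,\ldots,v_d)\mapsto 1^{v_1}0^{k-v_1}\cdots 1^{v_d}0^{k-v_d}$, which embeds the $(d,k)$-grid as a subgraph of the $dk$-cube with grid edges corresponding to single bit-flips. FPP on a subgraph is slower than on a supergraph, however, so this reduction is in the wrong direction and one would still need to certify that the subgraph retains enough shortest-path diversity to spread infection at rate $\Omega(1/d)$ per coordinate.
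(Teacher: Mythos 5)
Your approach is genuinely different from the paper's, and as written it has a structural gap that even a clean second--moment bound would not close. The paper's route is to reduce to first--passage percolation on the $d$-cube: using a combinatorial lemma of Balister et al.\ it establishes the exponential tail $\Pr\{\tau(Q,s,v)>x\}\le e^{-cxd}$ for the $d$-cube $Q$ and all $x\ge x_0$, then decomposes a grid $(s,v)$-geodesic into a chain of at most $k$ unit $d$-cubes (with odd- and even-indexed cubes internally independent) and applies a Chernoff-type bound for sums of these cube hitting times before taking the union bound over $v$. You instead propose a direct second-moment count $X$ of light shortest $(s,v)$-paths in the whole grid.

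The fatal gap is the boosting step, which you treat as a formality. Paley--Zygmund from $\E[X^2]=O(\E[X]^2)$ gives only $\Pr\{X\ge 1\}=\Omega(1)$, i.e.\ $\Pr\{\tau(G,s,v)\le Ck\}\ge p$ for a fixed constant $p>0$. The union bound over $n=(k+1)^d$ targets requires $\Pr\{\tau(G,s,v)>Ck\}=o((k+1)^{-d})$, exponentially small in $d\ln(k+1)$. The ``repetition argument'' you gesture at is exactly where the work lives: restarting the Richardson process $r$ times drives failure to $(1-p)^r$ at the cost of multiplying the time budget by $r$, and $(1-p)^r=o(n^{-1})$ forces $r=\Omega(d\ln k)$, which yields $\tau(G,s)=O(kd\ln k)$, not $O(k)$. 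What is actually needed is a bound of the form $\Pr\{\tau>x\}\le e^{-\Omega(xd)}$ on a nontrivial range of $x$ --- this is what the Balister et al.\ lemma supplies for the cube, and it is precisely what a bare second-moment argument cannot produce. Separately, you correctly flag that controlling $\E[X^2]$ over pairs of shortest paths sharing many edges is itself the technical heart of the Fill--Pemantle, Bollob\'as--Kohayakawa, and Martinsson analyses of hypercube FPP; treating it as ``a careful accounting of common subsequences'' to be filled in substantially understates the difficulty, and your honest note that the thermometer embedding runs in the wrong direction (subgraphs slow the infection down) is well taken but leaves no fallback.
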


Before jumping into the proof, we note that applying \lemref{meta-theorem}
gives the following corollary of \thmref{bestgrid-universal}.

\begin{cor}\corlabel{bestgrid}
Let $G$ be the $(d,k)$-grid and $n=(k+1)^d$.  For any vertex $s\in V(G)$
we have that with probability $1-o_n(1)$, $ h(\tcal(G,s)) = \Theta(dk)$.
\end{cor}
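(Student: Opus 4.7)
The plan is to read \corref{bestgrid} as a direct consequence of two ingredients already in place: the cover-time bound from \thmref{bestgrid-universal} and the generic tree-height bound from \lemref{meta-theorem}. The corollary needs no new probabilistic input---only a routine check of parameters for the upper bound, plus a deterministic eccentricity argument for the lower bound.

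For the upper bound, I would invoke \lemref{meta-theorem} with $K$ equal to the $O(k)$ cover-time bound supplied by \thmref{bestgrid-universal} (so $p = o_n(1)$), with $c = 2$, and with $a = \Delta = 2d$, relying on the trivial walk-count $\Pi(G,s,L) \le (2d)^L$ available for every $L$ from the maximum-degree bound. This makes $L = \lceil c e a K \rceil \in O(dk)$ and yields $h(\tcal(G,s)) \le L \in O(dk)$ with failure probability at most $p + 2^{-L}$. Since $n = (k+1)^d \to \infty$ forces $dk \to \infty$, the term $2^{-L}$ is $o_n(1)$, so the total failure probability is $o_n(1)$, matching the form claimed in the corollary.

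For the matching lower bound I would use the deterministic fact that the eccentricity of any vertex is at least half the diameter: if $u, w$ realize $D(G) = dk$, then by the triangle inequality $\max\{d(s,u), d(s,w)\} \ge dk/2$. Because $\tcal(G,s)$ is a spanning tree rooted at $s$, its height is at least the $G$-eccentricity of $s$, so $h(\tcal(G,s)) \ge dk/2$ unconditionally. Combining the bounds gives $h(\tcal(G,s)) = \Theta(dk)$. The only non-mechanical point is the need to check that $L \to \infty$ with $n$ so the $c^{-L}$ tail in \lemref{meta-theorem} decays, which is immediate from $n = (k+1)^d$. There is no real obstacle here: the substance of the $\Theta(dk)$ bound is hidden in \thmref{bestgrid-universal}, and the corollary itself is a plug-in.
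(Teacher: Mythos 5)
Your proposal is correct and matches the paper's intended proof: Corollary~\ref{cor:bestgrid} is presented as a direct plug-in of the $O(k)$ cover-time bound from \thmref{bestgrid-universal} into \lemref{meta-theorem} with $a=\Delta=2d$ and the trivial path-count bound $(2d)^L$, and the lower bound $h(\tcal(G,s))\ge D/2=dk/2$ is the deterministic eccentricity bound already noted in the introduction. Your check that $L\to\infty$ (hence $c^{-L}=o_n(1)$) because $n=(k+1)^d\to\infty$ forces $dk\to\infty$ is the right thing to verify and is correct.
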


To prove \thmref{bestgrid-universal}, we will make use of a concentration
result about the first-passage percolation time on the $d$-cube.

\begin{lem}\lemlabel{cube-diametrical}
   Let $Q$ be the $d$-cube, let $s\in V(Q)$ be arbitrary, and
   let $\bar{s}\in V(Q)$ be the unique vertex at distance $d$ from $s$.  
   Then there exist universal constants
   $c>0$ and $x_0>0$ such that,
   \[ 
     \Pr\{\tau(Q,s,\bar{s}) > x\} \le e^{-cxd} \enspace ,
   \]
   for all $x\ge x_0$.
\end{lem}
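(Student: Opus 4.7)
My strategy is a level-by-level analysis of the monotone infection together with a meet-in-the-middle argument at level $d/2$. First, I would restrict attention to the \emph{monotone infection}, in which infection may only spread along edges that increase the Hamming distance from $s$. Let $\tau^*$ denote the first-passage time to $\bar{s}$ under this restriction. Since $\tau^* \ge \tau(Q,s,\bar{s})$, it suffices to prove the tail bound for $\tau^*$.

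Let $N_i(t)$ denote the number of infected vertices at Hamming distance $i$ from $s$ at time $t$ under the monotone infection. Double-counting the edges between infected level-$i$ and uninfected level-$(i+1)$ vertices shows that new level-$(i+1)$ vertices become infected at rate at least $(d-i)N_i(t)-(i+1)N_{i+1}(t)$, which is $\Omega(d\cdot N_i(t))$ whenever $i \le d/2$ and $N_{i+1}(t)$ is at most a constant fraction of $\binom{d}{i+1}$. Using the Chernoff-type bound \eqref{tail-bound} for sums of exponentials, I would show inductively that with probability at least $1 - e^{-cxd}$, the infection reaches more than half of $V_{d/2}$ within time $x$. Because monotone-from-$s$ and monotone-from-$\bar{s}$ are the same edges traversed in opposite directions, coupling with the infection started at $\bar{s}$ via the same edge weights and running the analogous argument gives, with the same probability, that the reverse infection also covers more than half of $V_{d/2}$ by time $x$.

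Once both infected subsets exceed $|V_{d/2}|/2$, they must intersect. For any vertex $v$ in the intersection, concatenating the monotone path from $s$ to $v$ with the reversal of the monotone path from $\bar{s}$ to $v$ produces a monotone path from $s$ to $\bar{s}$ of weight at most $2x$, so $\tau^* \le 2x$. Choosing $x_0$ large enough that the initial transient phase of the induction is absorbed (i.e., that the first few levels, where the rate bound $\Omega(d\cdot N_i)$ is weakest because $N_i$ is still small, can be traversed in constant time with probability $1-e^{-cx_0 d}$) yields the claimed tail bound.

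The main obstacle will be executing the level-by-level rate analysis rigorously: edge weights are shared across many monotone paths, so it is important to track aggregate infected-to-uninfected edge counts between each pair of consecutive levels rather than individual path weights. With that bookkeeping in place, and by exploiting memorylessness of exponentials at each new infection event, the required concentration reduces to standard Chernoff-type bounds on sums of independent exponential random variables.
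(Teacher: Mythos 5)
Your proposal takes a genuinely different route from the paper's. The paper does not attempt any level-by-level Richardson dynamics; instead it imports a combinatorial lemma of Balister et al.\ (\cite[Lemma~4]{balister.bollobas.ea:first-passage}), which says that after retaining each hypercube edge independently with probability $\alpha/d$, with probability at least $1-e^{-\gamma d^2}$ there is a retained path of length $d-4$ from $L_2$ to $L_{d-2}$. Interpreting ``retained'' as ``weight at most $\ln(d/(d-\alpha))$'' yields a path of weight $O(1)$ across the middle of the cube; the two endpoints are then connected to $s$ and $\bar{s}$ by a union bound over $d$ edge-disjoint length-$2$ detours, giving the $e^{-cxd}$ term for $x$ in a bounded range, and a restart/bootstrap argument extends the bound to all $x\ge x_0$. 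That route entirely avoids the dynamics you try to analyze.

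The meet-in-the-middle structure of your sketch is sound in spirit (the two monotone processes use disjoint edge sets, and two majority subsets of $V_{d/2}$ must intersect), but the central inductive step has two genuine gaps. First, the rate bound you state is incorrect: $(d-i)N_i(t)-(i+1)N_{i+1}(t)$ is not $\Omega(dN_i(t))$ merely because $N_{i+1}(t)$ is at most a constant fraction of $\binom{d}{i+1}$. In the early phase $N_{i+1}$ typically outgrows $N_i$ by a factor of order $d$ (each newly infected level-$i$ vertex exposes about $d$ level-$(i+1)$ neighbours), so one can have, say, $N_1=1$ and $N_2=d$ (a vanishing fraction of $\binom{d}{2}$) while $(d-1)N_1-2N_2<0$, making your lower bound vacuous. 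The correct sufficient condition is roughly $N_{i+1}/N_i \ll (d-i)/(i+1)$, a much more restrictive and frequently violated constraint. Second, the target of your induction --- covering more than half of $V_{d/2}$ within time $x$ with failure probability $\le e^{-cxd}$ and $x_0$ a \emph{universal} constant --- is a far stronger statement than the single light middle path that the paper uses, and the sketch does not make the induction precise enough to rule out a $\sum_i 1/i$-type accumulation of per-level delays that would force $x_0=\Omega(\log d)$ rather than $O(1)$. These are exactly the delicate points the paper sidesteps by invoking the Balister et al.\ lemma as a black box.
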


\begin{proof}
   We assume that $d$ is greater than some sufficiently large constant,
   $d_0$.  Otherwise the result follows trivially from the union bound:
   With probability at least $1-d2^{d-1} e^{-x/d}$, every edge of the
   $d$-cube has weight at most $x/d$.  For $d \le d_0$, this satisifies
   the statement of the lemma with $x_0=3d_0\ln d_0$ and $c=1/(3d_0^2)$.

   We will prove the result for all $x\le d^2$.  Proving it for $x$
   in this range is sufficient, by a standard bootstrapping argument:
   For $x > d^2$, let $r=2^{\lceil\log(x/d^2)\rceil}$, so that 
   \[ x_0 \le x/r \le d^2 \]
   where the first inequality holds provided that $d^2\ge 2x_0$.
   Consider a modified version of Richardson's infection model,
   which has the same rules as the original process
   except that, for each $i\in\N$, if the process has
   not infected $\bar{s}$ by time $ix/r$, then we restart the process
   from the beginning.  Clearly the time to infect $\bar{s}$ in this modified
   process dominates the time to infect $\bar{s}$ in the original process, so
   \[  \Pr\{\tau(Q,s,\bar{s}) > x\} \le \Pr\{\tau(G,s,\bar{s}) > x/r\}^{r} 
      \le \left(e^{-cxd/r}\right)^{r} = e^{-cxd} \enspace .
   \]

   Thus, it suffices to prove the lemma for $x_0\le x\le d^2$.
   For each $i\in\{0,1,\ldots,d\}$, let $L_i$ denote the subset
   of $\binom{d}{i}$ vertices whose distance to $s$ is $i$.  Balister \etal\
   \cite[Lemma~4]{balister.bollobas.ea:first-passage} show that there are
   constants $\alpha,\gamma>0$ such that, if we sample each edge of $Q$
   independently with probability $\alpha/d$ then, with probability at
   least $1-e^{-\gamma d^2}$, there is a path of length $d-4$ consisting
   entirely of sampled edges and having one endpoint in $L_2$ and one
   endpoint in $L_{d-2}$.

   In our setting, where edge weights are independent $\exponential(1)$,
   if we only consider edges of weight at most $\ln(d/(d-\alpha))$,
   then we obtain a sample in which each edge is independently
   sampled with probability $\alpha/d$.  Therefore, with probability
   at least $1-e^{-\gamma d^2}$, there is a path of weight at most
   $d\ln(d/(d-\alpha))=O(1)$ joining a vertex $u$ in $L_2$ to a vertex
   $w$ in $L_{d-2}$.

   Now, there are $d$ edge-disjoint paths of length $2$ joining $s$
   to $u$.  Consider one such path, $P$.  If $P$ has weight greater than
   $x/3$ then at least one of $P$'s edges has weight greater than $x/6$,
   which occurs with probability at most $2e^{-x/6}$.  Therefore,
   the probability that all $d$ paths have weight greater than $x/3$
   is at most $(2e^{-x/6})^d \le e^{-adx}$ for $0<a<1/6-\ln 2/x$.
   Similarly, with probability at least $1-e^{-adx}$, there is a path of
   length 2 and weight at most $x/3$ joining $w$ to $\bar{s}$.

   Therefore, 
   \[
      \Pr\{\tau(Q,s,\bar{s}) > 2x/3 + d\ln(d/(d-a))\} 
        \le \Pr\{\tau(G,s,\bar{s}) > x\}  
           \le 2e^{-axd}+e^{-\gamma d^2}
   \]
   provided that $x \ge\max\{3d\ln(d/(d-a)),\ln 2/(1/6-a)\}$. This holds, 
   for example, when $x\ge x_0 = 12\ln 2$, $a=1/12$, and $d\ge 1$.
\end{proof}

\lemref{cube-diametrical} extends from $\tau(Q,s,\bar{s})$ to
$\tau(Q,s,v)$ for any vertex $v\in V(Q)$ using the union bound and
the fact that $v$ is at distance at least $d/2$ from at least one
of $s$ or $\bar{s}$. (This argument is also used by Balister \etal\
\cite{balister.bollobas.ea:first-passage}.)

\begin{cor}\corlabel{cube-radius}
   Let $Q$ be the $d$-cube, and let $s,v\in V(Q)$ be arbitrary.
   Then there exist universal constants
   $c>0$ and $x_0>0$ such that,
   \[ 
     \Pr\{\tau(Q,s,v) > x\} \le e^{-cxd} \enspace ,
   \]
   for all $x\ge x_0$.
\end{cor}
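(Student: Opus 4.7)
The plan is to reduce to \lemref{cube-diametrical} by applying it in appropriately chosen subcubes of $Q$, exploiting the hint in the preceding paragraph: since $d(s,v)+d(\bar{s},v)=d$, at least one of $d(s,v)$ or $d(\bar{s},v)$ is at least $d/2$. The structural fact I would use is that any two vertices $u,w$ of $Q$ at Hamming distance $k$ are antipodal in a unique $k$-dimensional subcube $Q_{u,w}\subseteq Q$, namely the induced subgraph on those $2^k$ vertices that agree with $u$ (equivalently, with $w$) in every coordinate where $u$ and $w$ agree. This $Q_{u,w}$ is isomorphic to the $k$-cube, so \lemref{cube-diametrical} applies to it with the same universal constants.

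If $d(s,v) \ge d/2$, I would take $Q' = Q_{s,v}$, of dimension $d(s,v)\ge d/2$. Every $(s,v)$-path in $Q'$ is also an $(s,v)$-path in $Q$, so $\tau(Q,s,v) \le \tau(Q',s,v)$, and \lemref{cube-diametrical} applied to $Q'$ directly gives $\Pr\{\tau(Q,s,v) > x\} \le e^{-cxd(s,v)} \le e^{-cxd/2}$ for $x\ge x_0$.

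In the remaining case $d(\bar{s},v) \ge d/2$, let $Q'' = Q_{\bar{s},v}$, which has dimension at least $d/2$. Concatenating a minimum-weight $s$-to-$\bar{s}$ path in $Q$ with a minimum-weight $\bar{s}$-to-$v$ path in $Q''$ yields an $s$-to-$v$ walk in $Q$; since for nonnegative edge weights the minimum-weight walk coincides with the minimum-weight path, this gives the subadditivity
\[
   \tau(Q,s,v) \le \tau(Q,s,\bar{s}) + \tau(Q'',\bar{s},v).
\]
A union bound then produces
\[
   \Pr\{\tau(Q,s,v) > x\} \le \Pr\{\tau(Q,s,\bar{s}) > x/2\} + \Pr\{\tau(Q'',\bar{s},v) > x/2\} \le e^{-cxd/2} + e^{-cxd/4}
\]
for $x$ at least $2x_0$, where the first term uses \lemref{cube-diametrical} applied to $Q$ itself and the second uses it applied to $Q''$. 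Shrinking $c$ and enlarging $x_0$ to absorb the constant loss delivers the claimed bound uniformly in both cases. There is no substantive obstacle; the only things to verify carefully are the isometric identification $Q_{u,w}\cong Q_k$ and the $\tau$-subadditivity, both of which are routine.
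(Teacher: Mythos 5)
Your proof is correct and fleshes out exactly the argument the paper alludes to: the paper gives only the one-line hint that one should use the union bound together with the fact that $v$ is at distance at least $d/2$ from at least one of $s$ or $\bar{s}$, and your two-case reduction to \lemref{cube-diametrical} via subcubes (plus subadditivity of $\tau$ through $\bar{s}$ in the second case) is the intended way to make that precise.
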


%

\corref{cube-radius} says that the tail of $\tau(Q,s,v)$
is dominated by an $\exponential(cd)$ random variable.  The next lemma
shows that sums of independent copies of $\tau(Q,s,v)$ also behave (roughly)
like sums of independent $\exponential(cd)$ random variables.

\begin{lem}\lemlabel{cube-diameter-sum}
   Let $Q$ be a $d$-cube, let $s_1,\ldots,s_k\in V(Q)$ and
   $v_1,\ldots,v_k\in V(Q)$ be arbitrary, and let $Z_1,\ldots,Z_k$
   be i.i.d., random variables where $Z_i$ is distributed like
   $\tau(Q,s_i,v_i)$.  Then there exist universal constants $c>0$ and
   $x_0>0$ such that
   \[ \Pr\left\{\sum_{i=1}^k Z_i > ak\right\} \le e^{(x_0+1)k-acdk/2} \enspace ,\]
for all $a>0$.
\end{lem}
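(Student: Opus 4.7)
The plan is to apply the Chernoff / moment-generating-function (MGF) method: bound $\Pr\{\sum_i Z_i > ak\}$ by $e^{-\lambda ak}\prod_i \E[e^{\lambda Z_i}]$ via Markov's inequality (using independence of the $Z_i$), and estimate each single-variable MGF using the tail bound from \corref{cube-radius}.

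First, let $c$ and $x_0$ denote the universal constants from \corref{cube-radius}, so that $\Pr\{Z_i>x\}\le e^{-cxd}$ for all $x\ge x_0$. Using the identity $\E[e^{\lambda Z_i}] = 1+\lambda\int_0^\infty e^{\lambda x}\Pr\{Z_i>x\}\,dx$ and splitting the integral at $x_0$, with $\Pr\{Z_i>x\}\le 1$ on $[0,x_0]$ and $\Pr\{Z_i>x\}\le e^{-cxd}$ on $[x_0,\infty)$, I obtain for any $\lambda\in(0,cd)$ the estimate
\[
\E[e^{\lambda Z_i}] \;\le\; e^{\lambda x_0}+\frac{\lambda}{cd-\lambda}\,e^{-(cd-\lambda)x_0}.
\]

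Next I would take $\lambda:=cd/2$, which is the natural balance point: it is large enough that the decay $e^{-\lambda ak}$ produces the required $-acdk/2$ term, yet keeps $cd-\lambda = cd/2$ bounded away from $0$ so the second MGF term stays at most $e^{-cdx_0/2}\le 1$. The MGF estimate reduces to $\E[e^{\lambda Z_i}]\le 2e^{cdx_0/2}$, and Markov's inequality applied to $e^{\lambda\sum_i Z_i}$ (using independence) gives
\[
\Pr\!\left\{\sum_{i=1}^k Z_i>ak\right\} \;\le\; e^{-acdk/2}\,\bigl(2e^{cdx_0/2}\bigr)^k \;=\; \exp\!\bigl(k(cdx_0/2+\ln 2)-acdk/2\bigr).
\]
After relabelling the universal constants in the lemma's statement (so as to absorb the numerical factor $\ln 2$ and the scaling $cdx_0/2$ into a new $x_0+1$, which is permissible since we may choose the lemma's $c$ and $x_0$ freely from those supplied by the corollary), this becomes $e^{(x_0+1)k-acdk/2}$.

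The main obstacle is calibrating $\lambda$. A constant choice such as $\lambda=1$ would give a decay of only $-ak$ instead of $-acdk/2$, losing the crucial factor of $cd$; conversely, letting $\lambda$ approach $cd$ would make $1/(cd-\lambda)$ and the $e^{-(cd-\lambda)x_0}$ term blow up. The symmetric choice $\lambda=cd/2$ balances these two competing effects and is the key to getting both the correct rate of decay in $a$ and a per-coordinate MGF that is controlled by a clean expression.
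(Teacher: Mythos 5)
Your proof is correct and takes essentially the same route as the paper's: both are Chernoff bounds with the exponential parameter tuned to $\lambda=cd/2$. The paper packages this by first observing that \corref{cube-radius} makes $Z_i$ stochastically dominated by $x_0$ plus an $\exponential(cd)$ variable, and then invoking the generic tail bound \eqref{tail-bound} from the preliminaries (which is itself a Chernoff bound at parameter half the rate); you instead bound the MGF of $Z_i$ directly by integrating the tail. The mechanics differ slightly, but the underlying calculation and the resulting bound are the same. One small loose end you should be aware of: your final bound is $\exp\bigl(k(\ln 2 + cdx_0/2) - acdk/2\bigr)$, and the prefactor $cdx_0/2$ cannot honestly be absorbed into a universal $x_0+1$ because it grows with $d$; your ``relabelling'' remark glosses over this. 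However, the paper's own proof produces the identical $\exp\bigl((1 + x_0cd/2)k - acdk/2\bigr)$ (after correcting an apparent typo where $X_i$ should be $\exponential(cd)$ rather than $\exponential(1)$), so this imprecision in the lemma's statement is shared with the source, and it is inconsequential for the only place the lemma is used (the proof of \thmref{bestgrid-universal}), where the extra factor of $cd$ only changes the numerical value of $a$.
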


\begin{proof}
If $X$ is $\exponential(cd)$, then \corref{cube-radius} says that
\[
    \Pr\{\tau(Q,s_i,v_i) > x\} \le e^{-cxd} = \Pr\{X > x\}
\]
for all $x>x_0$.  This implies that
\[
    \Pr\{\tau(Q,s_i,v_i) > x\} \le \Pr\{X+x_0 > x\}
\]
for all $x>0$, i.e., $X+x_0$ stochastically dominates $\tau(Q,s_i,v_i)$.  Therefore, 
if $X_1,\ldots,X_k$ are independent $\exponential(1)$, then 
\[
    \Pr\left\{\sum_{i=1}^k Z_i > ak\right\} \le 
    \Pr\left\{\sum_{i=1}^k X_i > (a-x_0)k \right\} \le
    e^{(x_0+1)k-acdk/2} \enspace ,
\]
where the second inequality is an application of \eqref{tail-bound}.
\end{proof}

We can now finish the proof of \thmref{bestgrid-universal}.

\begin{proof}[Proof of \thmref{bestgrid-universal}]
The idea of this proof is that, for any vertex $v$, there is a path
from $s$ to $v$ that visits at most $k$ $d$-cubes. Therefore, there is
a path from $s$ to $v$ whose length can be expressed as a sum like
that considered in \lemref{cube-diameter-sum}.

For each vertex $u=(u_1,\ldots,u_d)$ with $u_i\in\{0,\ldots,k-1\}$
for each $i\in\{1,\ldots,d\}$, define the subgraph
$Q_u=G[V_u]$ of $G$ induced by the vertex set
\[
    V_u = \{(u_1+x_1,\ldots,u_d+x_d) :\text{$x_i\in\{0,1\}$ for each $i\in\{1,\ldots,d\}$}\} \enspace .
\]
Each $Q_u$ is a $(d,1)$-grid, i.e, a $d$-cube.  
For any vertex $v\in G$, there is a sequence $v_1,v_2,\ldots,v_{k'}$
of vertices in $G$ with $k'\le k$ such that
\begin{enumerate}
  \item $s\in Q_{v_1}$ and $v\in Q_{v_{k'}}$; 
  \item for each $i\in\{1,\ldots,k'-1\}$, $Q_{v_{i}}$ and $Q_{v_{i+1}}$ have at least one
vertex in common.
  \item for each $i\in\{1,\ldots,k'\}$ and each $j\in\{1,\ldots,k'\}\setminus\{i-1,i,i+1\}$, $Q_{v_i}$ and $Q_{v_j}$ have no edges (or vertices) in common.
\end{enumerate}
The sequence $v_1,\ldots,v_{k'}$ can be found with a greedy algorithm:
Define $v_0'=s$.  Now, if $v_{i-1}'$ and $v$ differ in $r$ coordinates,
then there is some vertex $v_i$ such that $Q_{v_i}$ contains $v_{i-1}'$
as well as some vertex $v_i'$ whose distance to $v$ is $r$ less than
the distance from $v_{i-1}'$ to $v$. It is straightforward to verify
that the resulting sequence of vertices $v_1,\ldots,v_{k'}$ satisfies
the three properties described above.

For each $i\in\{1,\ldots,k'\}$,
let $s_i=v_{i-1}'$ and $x_i=v_i'$.  Now, observe that
\[
   \tau(G,s,v) \le \sum_{i=1}^{k'} \tau(Q_{v_i},s_i,x_i)  \enspace .
\]
Point 3, above, ensures that the random variables
$\tau(Q_{v_1},s_1,x_1),\ldots,\tau(Q_{v_{k'}},s_{k'},x_{k'})$ can be
partitioned into two sets of size $\lfloor k'/2\rfloor$ and $\lceil
k'/2\rceil$ where the variables within each set are independent.

By \lemref{cube-diameter-sum} we now have
\begin{align*}
   \Pr\{ \tau(G,s,v) > ak\} 
       & \le \Pr\left\{\sum_{i=1}^{\lceil k'/2\rceil} \tau(Q_{v_{2i-1}},s_{2i-1},x_{2i-1}) > ak\right\} 
          + \Pr\left\{\sum_{i=1}^{\lfloor k'/2\rfloor} \tau(Q_{v_{2i}},s_{2i},x_{2i})>ak\right\} \\
          & \le 2 e^{(x_0+1)k - acdk/2} \enspace .
\end{align*}
Applying the union bound over all $(k+1)^d$ choices of $v$ completes the proof:
\begin{align*}
   \Pr\{\tau(G,s) > ak\} 
     & \le \sum_{v\in V(G)} \Pr\{\tau(G,s,v) > ak\}  \\
     & \le 2(k+1)^d e^{(x_0+1)k - acdk/2} \\
     & = e^{d\ln(2(k+1))+(x_0+1)k - acdk/2} \\
     & = o_n(1) \enspace ,
\end{align*}
for
\[
   a > \frac{2(d\ln(2(k+1)+(x_0+1)k)}{cdk} \enspace . \qedhere
\]
\end{proof}

\section{Lower Bounds for General Graphs}
\seclabel{lower-bounds}

Next, we describe a series of lower bound
constructions that match the upper bounds obtained in
Theorems~\ref{thm:alantree-upper-bound}--\ref{thm:alantree-upper-bound-genus}.
In particular, these constructions show that the dependence on $\Delta$
in the upper bounds in the previous sections can not be asymptotically
reduced.

In this section we prove the following theorem.

\begin{thm}
\thmlabel{genericlowerbound}
There exists a positive constant $c$ such that 
for any given positive integers $1<\Delta, D$ satisfying
$D\geq 16e^3 \ln \Delta$,
there exists an $n$-vertex graph $G$ with 
maximum degree $\leq \Delta$,
diameter $\leq D$, and
a vertex $s$ satisfying
$\Pr\{h(\tcal(G,s)) \geq c(\Delta \ln n + \Delta D)\}\geq 1-o_n(1)$.
\end{thm}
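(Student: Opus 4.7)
The plan is to prove the lower bound by explicit construction, analyzed using the first-passage percolation correspondence from \secref{sec:fpp_connection}. Recall that the tree depth of a vertex $v$ in $\tcal(G,s)$ equals the number of edges on the lightest-weight $s$-to-$v$ path when edge weights are i.i.d.\ $\exponential(1)$, so $h(\tcal(G,s))$ equals $\max_v$ of that edge-length. Since the hypothesis $D \ge 16 e^3 \ln \Delta$ together with $n=\Theta(\Delta D)$ gives $\ln n = O(D)$, the target bound $\Omega(\Delta(D+\ln n))$ reduces in that regime to $\Omega(\Delta D)$, so I would aim for a construction of this size with $h = \Omega(\Delta D)$ w.h.p.

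The construction I would attempt is a \emph{chain of clusters}: pick a small gadget $H$ on $\Theta(\Delta)$ vertices of maximum degree at most $\Delta$, with two designated port vertices $\alpha,\beta$; take $m=\Theta(D)$ copies $H_1,\dots,H_m$ and glue them into a linear structure by identifying $\beta_i$ with $\alpha_{i+1}$, and set $s := \alpha_1$. If $\diam(H)=O(1)$, then $G$ has diameter $O(m)=O(D)$, max degree at most $\Delta$, and $n=\Theta(\Delta D)$ vertices.

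Next, I would observe that since each $H_i$ is separated from the rest of $G$ only by its two ports, any $s$-to-$\beta_m$ path in $G$ decomposes uniquely into $m$ port-to-port segments, one per cluster; moreover, the edge weights in disjoint clusters are independent. Let $L_i$ denote the number of edges in the FPP-lightest $\alpha_i$-to-$\beta_i$ path inside $H_i$. Then the tree depth of $\beta_m$ in $\tcal(G,s)$ equals exactly $\sum_{i=1}^m L_i$. If $H$ can be designed so that $\Pr[L_i \ge c_0\Delta]\ge 1/2$ for some absolute constant $c_0>0$, then since the $L_i$ are mutually independent, a standard Chernoff bound yields $\sum_i L_i \ge c_0\Delta m/4 = \Omega(\Delta D)$ with probability $1-e^{-\Omega(m)}=1-o_n(1)$, giving the lower bound on $h(\tcal(G,s))$.

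The main obstacle is designing the gadget $H$, which must combine small diameter (so that chaining preserves the diameter bound), maximum degree $\Delta$, and an FPP-lightest port-to-port path of length $\Omega(\Delta)$ with constant probability. Standard small-diameter candidates like $K_\Delta$ or $K_{\Delta/2,\Delta/2}$ fall short: optimizing the path-count-vs.-length trade-off (the lightest path of length $\ell$ has weight roughly $\ell/(eN_\ell^{1/\ell})$, where $N_\ell$ is the number of such paths) gives an optimum at $\ell \sim \ln\Delta$, not $\Delta$. Cycle-type gadgets achieve $\ell=\Omega(\Delta)$ but have diameter $\Omega(\Delta)$, which would inflate $G$'s diameter when chained. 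The construction must therefore use a more subtle gadget, for instance a cycle reinforced by a sparse set of carefully placed chords that reduce its diameter to $O(1)$ while still favoring long, winding FPP paths over the shortcut edges. The $\Omega(\Delta \ln n)$ contribution in the regime where $n$ is exponentially large in $D$ would be obtained by scaling up the construction (e.g., arranging gadgets in a branching tree of depth $\Theta(D)$ instead of a linear chain) so that $n$ grows appropriately while the diameter remains $O(D)$.
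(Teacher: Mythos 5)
Your chain-of-clusters plan has a genuine unresolved gap at its center: the gadget $H$. You correctly observe that for complete or complete-bipartite gadgets the length-versus-count trade-off makes the FPP-lightest port-to-port path have length $\Theta(\ln\Delta)$, not $\Theta(\Delta)$, and you conclude the proof with ``the construction must therefore use a more subtle gadget'' without producing one. This is not a detail that can be deferred---it is the whole difficulty. Indeed, demanding a graph on $\Theta(\Delta)$ vertices with diameter $O(1)$, maximum degree $\le\Delta$, in which the FPP-lightest $\alpha$-to-$\beta$ path has $\Omega(\Delta)$ edges with constant probability, is in tension with the same path-counting heuristic you invoke: if the geodesic has weight $K=O(1)$, then a union bound over at most $\Delta^{\ell}$ paths of length $\ell$ together with \eqref{head-bound} shows that no path of length $\ell\ge C K\Delta$ has weight $\le K$ except with exponentially small probability, so the FPP path length must sit in the narrow window between $\ln\Delta$ and $O(\Delta)$, and you have not exhibited any gadget that pushes it to the top of that window. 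Until such a gadget exists, the Chernoff step and the rest of the argument do not get off the ground. Your closing sentence, suggesting a branching tree of gadgets to capture the $\Omega(\Delta\ln n)$ term, is also unnecessary: under the hypothesis $D\ge 16e^3\ln\Delta$ and with $n=\operatorname{poly}(\Delta,D)$ one always has $\ln n = O(D)$, so a single construction with $h=\Omega(\Delta D)$ already delivers $\Omega(\Delta(D+\ln n))$.

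The paper's construction avoids the gadget problem by not using small-diameter clusters at all. It takes one large ``ladder'' $H$ consisting of $L=\Theta(\Delta D)$ consecutive layers of $\delta=\Theta(\Delta)$ vertices, with complete bipartite graphs between consecutive layers; this graph has graph-diameter $\Theta(L)$, far larger than $D$, but the FPP cover time from one end to the other is only $\Theta(L/\delta)=\Theta(D)$ because each layer-to-layer step takes only the minimum of $\delta$ exponentials (\lemref{lightning}). The small diameter is then imposed externally by attaching a separate component $I$: a binary tree on the $L$ layer-representatives whose leaf-incident edges are subdivided so that leaf-to-leaf distances in $I$ are $\Theta(D)$ (\lemref{slow-train}). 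Because the subdivided paths in $I$ carry weight that is sharply concentrated at $\Theta(D)$, while FPP through the ladder reaches each layer in weight $\approx D/4$ or less, the infection tree essentially ignores $I$, and its height is governed by the $\Theta(\Delta D)$-layer ladder. In short, the paper realizes ``long FPP path in a small-diameter graph'' by making the long-diameter and small-diameter roles live in two different subgraphs glued along a common vertex set, rather than by trying to pack both properties into each cluster as your plan requires.
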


The graph $G$ is obtained by gluing together two graphs $H$ and $I$.
The graph $H$ has large diameter and high connectivity.  The graph $I$
has low connectivity and small diameter.  By gluing them we obtain a graph
of low diameter (because of $I$) but for which the infection is more likely to spread via $H$ (because of its high connectivity), and hence will have a large height.  We begin by
defining and studying $H$ and $I$ individually.

\subsection{The Ladder Graph $H$}

Let $L,\delta,a$ be positive integers.
The graph $H$ is shown in~\figref{h}.  The vertices of $H$ are partitioned into $L$ groups
$V_1,\ldots,V_L$, each of size $\delta$. The edge set of $H$ is
\[
   E(H) = \bigcup_{i=1}^{L-1} \{vw : v\in V_{i},\, w\in V_{i+1}\} \enspace .
\]
\begin{figure}
  \begin{center}
    \includegraphics{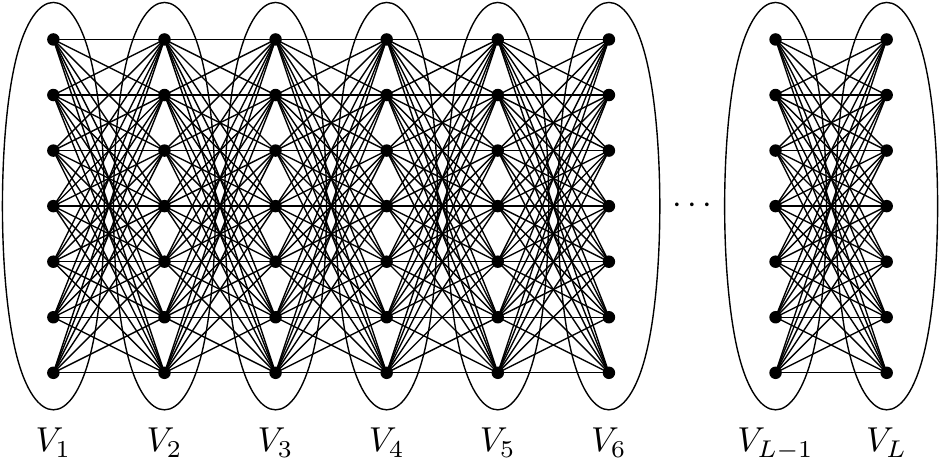} 
  \end{center}
  \caption{The graph $H$.}
  \figlabel{h}
\end{figure}

First we show that the infection spreads rather quickly in $H$, namely we prove upper bounds for $\tau(H,v,w)$.

\begin{lem}\lemlabel{lightning}
Let $a>e^2$. Then for any $1 \leq i < j \leq n$ and any $v\in V_i$, $w\in V_j$ we have
  \[
    \Pr\{\tau(H,v,w) > 2aL/(e^2\delta)\} 
        \le \exp(L-aL/(2e^2)) + \exp(-aL/(e^2\delta)) \enspace . 
  \]
\end{lem}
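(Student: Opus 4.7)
The plan is to upper bound $\tau(H,v,w)$ by the weight of an explicit random monotonic path from $v$ to $w$, and then control that weight via the tail inequalities from \secref{inequalities}. Write $l = j - i \le L$. I would construct the path greedily: start at $u_0 = v$ and, for each $k = 1,\dots,l-1$, let $u_k$ be the vertex in $V_{i+k}$ minimizing $\tau(u_{k-1}u_k)$; then close the path with the edge $u_{l-1}w$ (setting $u_l = w$). Writing $W_k = \tau(u_{k-1}u_k)$, this gives the upper bound $\tau(H,v,w) \le \sum_{k=1}^{l} W_k$, so it suffices to control the tail of this sum.

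The key step is to argue that $W_1,\dots,W_{l-1}$ are i.i.d.\ $\exponential(\delta)$ and that $W_l$ is an independent $\exponential(1)$. Conditional on $u_{k-1}$, the weight $W_k$ is the minimum of $\delta$ i.i.d.\ $\exponential(1)$ edge weights emanating from $u_{k-1}$ into $V_{i+k}$, so by the minimum-of-exponentials computation in \secref{inequalities} we have $W_k \sim \exponential(\delta)$; moreover these $\delta$ edges are disjoint from the edges that determined $u_0,\dots,u_{k-1}$, so $W_k$ is independent of $W_1,\dots,W_{k-1}$. The final edge $u_{l-1}w$ has also not been examined in any earlier step, hence $W_l \sim \exponential(1)$ independently of the rest.

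Finishing is then a straightforward union bound. Split the threshold as $2aL/(e^2\delta) = aL/(e^2\delta) + aL/(e^2\delta)$ and control the events $\{\sum_{k=1}^{l-1} W_k > aL/(e^2\delta)\}$ and $\{W_l > aL/(e^2\delta)\}$ separately. The first sum is $\erlang(l-1,\delta)$ with mean $(l-1)/\delta$; the threshold equals this mean times $t = aL/((l-1)e^2) \ge a/e^2 > 1$ (this is precisely where the hypothesis $a > e^2$ enters), so \eqref{tail-bound} yields a bound of $\exp(l-1 - aL/(2e^2)) \le \exp(L - aL/(2e^2))$. The second probability is $\exp(-aL/(e^2\delta))$ by the exponential tail. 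Adding the two bounds proves the lemma; the corner case $l = 1$ is immediate since then $\sum_{k=1}^{l-1}W_k = 0$ vacuously and the second term alone dominates. The main obstacle in writing this up is simply keeping the independence claim honest, which comes down to noting that each greedy step reveals information about a disjoint batch of $\delta$ edges in a fresh bipartite layer.
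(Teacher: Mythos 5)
Your proposal is correct and takes essentially the same route as the paper: the same greedy minimum-weight path through successive layers, the same observation that the first $l-1$ weights are independent $\exponential(\delta)$ and the last is $\exponential(1)$, a union bound splitting the threshold in half, and \eqref{tail-bound} on the Erlang part. The only cosmetic difference is that the paper invokes stochastic dominance of $\erlang(L,\delta)$ over $\erlang(l-1,\delta)$ before applying the tail bound, whereas you apply the tail bound to $\erlang(l-1,\delta)$ directly and then use $l-1\le L$ in the exponent; you are also somewhat more explicit than the paper about why the $W_k$ are independent, which is a welcome bit of care.
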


\begin{proof}
   Consider the following greedy algorithm for finding a path from $v$
   to $w$: The path starts at $v$ (which is in $V_i$).  When the path
   has reached some vertex $x\in V_{k}$, for $k<j-1$, the algorithm
   extends the path by taking the minimum-weight edge joining $x$ to some
   vertex in $V_{k+1}$.  When the algorithm reaches some $x\in V_{j-1}$,
   it takes the edge $xw$.

   Let $m=j-i$.  Each of the first $m-1$ edges in the resulting path
   has a weight that is the minimum of $\delta$ $\exponential(1)$
   random variables.  Thus, the sum of weights of these edges is the sum of
   $m-1$ $\exponential(\delta)$ random variables, i.e.\ an $\erlang(m-1,\delta)$ random variable.
   The weight of the final edge is an $\exponential(1)$ random variable.
   Thus we find
\begin{align*}
\Pr\left\{\tau(H,v,w)> 2aL/(e^2\delta) \right\} & \leq 
\Pr\left\{\erlang(m-1,\delta)+\exponential(1) > 2aL/(e^2\delta) \right\}\\
& \leq 
\Pr\left\{\erlang(m-1,\delta) > aL/(e^2\delta) \right\}
+
\Pr\left\{\exponential(1) > aL/(e^2\delta) \right\} \\
& \leq 
\Pr\left\{\erlang(L,\delta) > aL/(e^2\delta) \right\}
+
\exp(-aL/(e^2\delta)) \\
& \le \exp(L-aL/(2e^2)) + \exp(-aL/(e^2\delta)) \:.
\end{align*}   
The first inequality follows from the discussion above.
The second inequality follows from the union bound.
The third inequality is because an $\erlang(L,\delta)$ random variable stochastically dominates an $\erlang(m-1,\delta)$ random variable,
and the definition of the exponential distribution.
The final equality follows from the tail bound \eqref{tail-bound}.
\end{proof}


\subsection{The Subdivided Tree $I$}

Next, we consider a tree $I$ that is obtained by starting with a perfect
binary tree\footnote{A perfect binary tree, sometimes called a complete binary tree, is a binary tree in which all vertices have 0 or 2 children, and all leaves have the same depth: \url{https://xlinux.nist.gov/dads/HTML/perfectBinaryTree.html}} having $L$ leaves and then subdividing each edge incident
to a leaf $\lceil aL/\delta\rceil-1$ times so that each leaf-incident
edge becomes a path of length $\lceil aL/\delta\rceil$.  Note that $I$
has height $\lceil aL/\delta\rceil+\log_2 L-1$ (we assume $L$ is a power of 2).


We next show that the infection spreads rather slowly in $I$, namely we prove lower bounds for $\tau(I,v,w)$.

\begin{lem}\lemlabel{slow-train}
For any distinct leaves $v$ and $w$ we have
   $\Pr\{\tau(I,v,w) \le 2aL/(e^2\delta)\} \le \exp(-2aL/\delta)$.
\end{lem}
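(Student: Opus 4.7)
The key observation is that $I$ is a tree, so between any two distinct leaves $v$ and $w$ there is a \emph{unique} path, which I will call $P$. Hence $\tau(I,v,w) = \tau(P)$ exactly, and this is a sum of independent $\exponential(1)$ edge weights.

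The plan is to lower bound the length (number of edges) of $P$ and then apply the head bound \eqref{head-bound}. Since $v$ and $w$ are distinct leaves, the path $P$ must traverse the entire subdivided leaf-incident path at $v$ (of length $\lceil aL/\delta\rceil$), then some edges in the perfect binary subtree, and finally the entire subdivided leaf-incident path at $w$ (of length $\lceil aL/\delta\rceil$). Thus $P$ has at least $k := 2\lceil aL/\delta\rceil \ge 2aL/\delta$ edges, and $\tau(P)$ stochastically dominates an $\erlang(k,1)$ random variable.

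Now I would apply \eqref{head-bound} to the sum of $k$ independent $\exponential(1)$ variables with threshold $2aL/(e^2\delta)$. Setting $d := k \cdot e^2\delta / (2aL)$, we have $\mu k/d = 2aL/(e^2\delta)$, and $d \ge e^2$ (since $k \ge 2aL/\delta$). Consequently
\[
   \Pr\{\tau(I,v,w) \le 2aL/(e^2\delta)\}
      \le \left(\tfrac{e}{d}\right)^{k}
      \le e^{-k}
      \le \exp(-2aL/\delta),
\]
as required.

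There is essentially no obstacle here: the proof is just a direct application of \eqref{head-bound} to the unique $(v,w)$-path in the tree. The only small care needed is to verify $d \ge e^2$ so that $e/d \le e^{-1}$, which is immediate from the choice of subdivision length $\lceil aL/\delta\rceil$.
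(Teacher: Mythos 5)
Your proof is correct and follows essentially the same route as the paper: both observe that the unique $(v,w)$-path in the tree $I$ contains at least $2\lceil aL/\delta\rceil$ edges and apply \eqref{head-bound} to the resulting sum of independent $\exponential(1)$ weights. The only difference is that you carry out the substitution $d = ke^2\delta/(2aL)$ explicitly and verify $d \ge e^2$, which the paper leaves implicit.
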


\begin{proof}
  The path from $v$ to $w$ in $I$ contains at least $2\lceil
  aL/\delta\rceil$ edges.  Therefore, the weight of this path is
  lower-bounded by the sum of $2\lceil aL/\delta\rceil$ independent
  $\exponential(1)$ random variables.  The lemma then follows by applying
  \eqref{head-bound} to this sum.
\end{proof}

\subsection{Putting it Together}

The lower-bound graph $G$ is now constructed by taking a tree $I$ with
$L$ leaves and a graph $H$ with $L$ groups $V_1,\ldots,V_L$ each of size
$\delta=\lfloor(\Delta-1)/2\rfloor$.  Next, we consider the leaves of
$I$ in the order they are encountered in a depth first-traversal of $I$
and, for each $i\in\{1,\ldots,L\}$ we identify the $i$th leaf of $I$
with some vertex in $V_i$.  See \figref{g}.

\begin{figure}
  \begin{center}
    \includegraphics{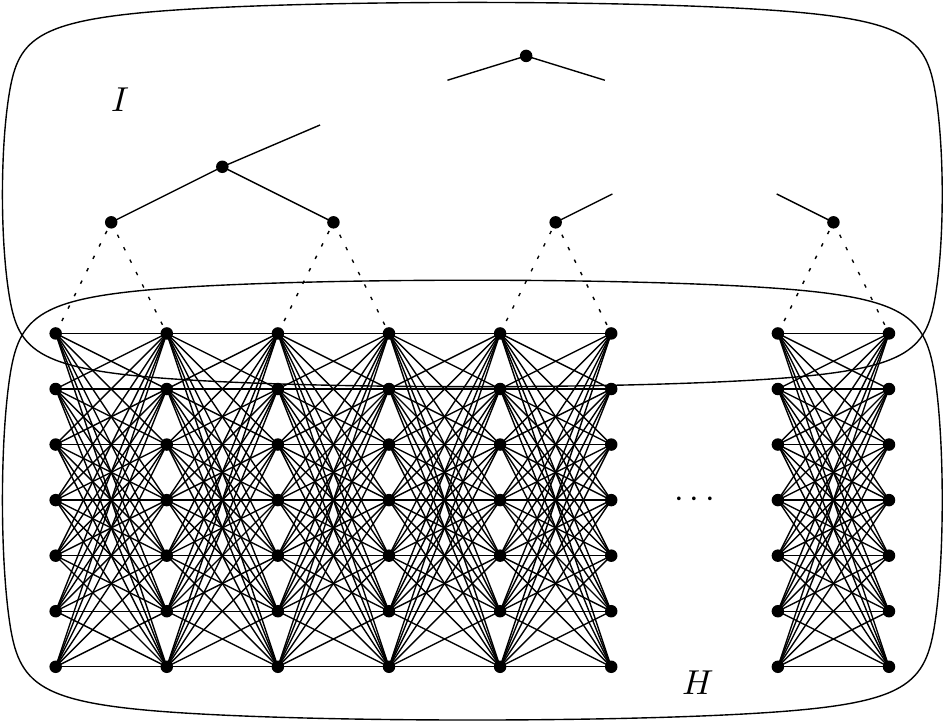}
  \end{center}
  \caption{The lower bound graph $G$. Dashed segments denote subdivided edges (paths of length $\lceil aL/\delta\rceil$).}
  \figlabel{g}
\end{figure}

\begin{lem}\lemlabel{lowerboundlemma}
For any vertex $s\in V_1$ in the graph $G$ described above, we have
\[
\Pr\{h(\tcal(G,s)) < L-1\}
\le L^2\left(\exp((1-a/2e^2)L) 
                + \exp(-aL/(e^2\delta)) 
                + \exp(-2aL/\delta)
              \right)
\]
\end{lem}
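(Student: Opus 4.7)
My plan is to take $\ell_L$, the leaf of $I$ identified with a vertex of $V_L$, as the witness for the height bound: on a high-probability event I will show that the path from $s$ to $\ell_L$ in $\tcal(G,s)$ uses only edges of $H$, and then the layered structure of $H$ will force this path to have length at least $L-1$.

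I will set the threshold $T=2aL/(e^2\delta)$, which is precisely the quantity separating the two regimes in \lemref{lightning} and \lemref{slow-train}, and introduce two ``good'' events: $\mathcal{E}_1=\{\tau(H,s,\ell_i)\le T\text{ for every }i\in\{2,\ldots,L\}\}$ and $\mathcal{E}_2=\{\tau(I,\ell_i,\ell_j)>T\text{ for every pair of distinct leaves }\ell_i,\ell_j\}$. Applying \lemref{lightning} to the $L-1$ pairs $(s,\ell_i)$, together with a union bound, gives $\Pr\{\overline{\mathcal{E}_1}\}\le L\bigl(\exp((1-a/(2e^2))L)+\exp(-aL/(e^2\delta))\bigr)$; applying \lemref{slow-train} and a union bound over the $\binom{L}{2}$ pairs of distinct leaves gives $\Pr\{\overline{\mathcal{E}_2}\}\le L^2\exp(-2aL/\delta)/2$. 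Summing these and crudely replacing the prefactors by $L^2$ recovers the tail bound stated in the lemma.

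The core combinatorial claim, assuming $\mathcal{E}_1\cap\mathcal{E}_2$, is that the $\tcal$-parent of every $H$-vertex $v\ne s$ lies in $H$. If $v\in V(H)\setminus V(I)$, this is automatic since all of $v$'s $G$-neighbors lie in $H$. Otherwise $v=\ell_i$ is a leaf of $I$ with $i\ge 2$, and I need to rule out its $I$-neighbor as its $\tcal$-parent---equivalently, show that the minimum-weight $(s,\ell_i)$-path in $G$ uses only $H$-edges. The key observation is that $V(H)\cap V(I)=\{\ell_1,\ldots,\ell_L\}$, so any $(s,\ell_i)$-path in $G$ that uses at least one $I$-edge contains a maximal $I$-only subpath of positive length whose endpoints are leaves of $I$; these two leaves must be distinct since $I$ is a tree. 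On $\mathcal{E}_2$ that subpath has weight greater than $T$, so the whole path has weight greater than $T$, whereas $\mathcal{E}_1$ provides an $H$-only $(s,\ell_i)$-path of weight at most $T$; thus the first-passage-percolation minimizer must avoid $I$ altogether.

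Iterating this parent-in-$H$ property from $\ell_L$ upward, the $\tcal$-path from $s$ to $\ell_L$ is a simple path in $H$ with one endpoint in $V_1$ and the other in $V_L$; since the edges of $H$ join only consecutive layers $V_j$ and $V_{j+1}$, this path must have length at least $L-1$, yielding $h(\tcal(G,s))\ge L-1$ on $\mathcal{E}_1\cap\mathcal{E}_2$. The one delicate step is the ``bridge'' observation in the previous paragraph---that any $(s,\ell_i)$-route using $I$-edges necessarily spends weight more than $T$ inside $I$---which is where the tree structure of $I$ and the bipartite gluing enter; everything else is a union bound or a direct application of the two preceding lemmas.
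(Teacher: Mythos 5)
Your proof is correct and takes essentially the same approach as the paper's: compare first-passage distances in $H$ and $I$ against a common threshold $T=2aL/(e^2\delta)$, apply \lemref{lightning} and \lemref{slow-train}, and union bound. The main cosmetic difference is the packaging---you isolate two global events $\mathcal{E}_1,\mathcal{E}_2$ and union-bound $\mathcal{E}_1$ over the $L-1$ pairs $(s,\ell_i)$, whereas the paper union-bounds over all $\binom{L}{2}$ leaf pairs $\{v,w\}$ and for each bounds $\Pr\{\tau(I,v,w)\le\tau(H,v,w)\}$ by splitting at the same threshold---both yielding the stated $L^2$ prefactor.

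One small remark: your stated ``core combinatorial claim'' that \emph{every} $H$-vertex $v\ne s$ has its $\tcal$-parent in $H$ is slightly over-stated, since your case analysis only covers $v\notin V(I)$ and $v=\ell_i$ with $i\ge 2$, and $\mathcal{E}_1$ gives no control over $\tau(H,s,\ell_1)$ (if $\ell_1\ne s$, that distance concerns two vertices in the same layer $V_1$, which \lemref{lightning} does not address). This does not affect the conclusion, because what you actually prove---and all you need---is that the shortest $(s,\ell_L)$-path is $H$-only: any $(s,\ell_L)$-path using an $I$-edge has weight $>T$ on $\mathcal{E}_2$ (via your maximal-$I$-subpath decomposition), while $\mathcal{E}_1$ with $i=L$ gives $\tau(G,s,\ell_L)\le\tau(H,s,\ell_L)\le T$. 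That $H$-only path goes from $V_1$ to $V_L$ in a ladder, so has length at least $L-1$, and the ``iterate parents upward'' phrasing can be dropped in favor of this direct argument. Incidentally, your $I$-subpath decomposition is a more explicit justification of the paper's terse claim that an $I$-using path forces two leaves $v,w$ with $\tau(I,v,w)\le\tau(H,v,w)$---a point the paper glosses over.
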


\begin{proof}
Recall that $\tcal(G,s)$ is the shortest-path tree rooted at $s$ for the first-passage percolation in $G$.
If this tree contains no edge of $I$,
its height is at least $L-1$.
If it does use some edge of $I$, then there must be two leaves $v$ and $w$ of $I$ such that $\tau(I,v,w)\le \tau(H,v,w)$. Since there are $\binom{L}{2}<L^2$ choices for the pair $\{v,w\}$,
using \lemref{lightning} and \lemref{slow-train}, we can bound the probability of this event by
  \begin{align*}
       & L^2\left(\Pr\{d_H(v,w) > 2aL/\delta\} + \Pr\{d_I(v,w) < 2aL/\delta\}\right)  \\
       & \le L^2\left(\exp((1-a/2e^2)L) 
                + \exp(-aL/(e^2\delta)) 
                + \exp(-2aL/\delta)
              \right)\:,
  \end{align*}
which proves the lemma.
\end{proof}

We  now have all the ingredients to prove the main theorem of this section,
\thmref{genericlowerbound}.

\begin{proof}[Proof of~\thmref{genericlowerbound}]
Let $a=4e^2$, $\delta = (\Delta-1)/2$, and let $L$ be the largest power of 2 that is not larger than $D \Delta / 8a$.
Let $G$ be the graph described above.
The maximum degree of $G$ is $2\delta+1 = \Delta$, and the diameter of $G$ is bounded by
\[
2 (aL/\delta + \log_2 L) \leq 
2 (a \times (D \Delta / 8a) / (\Delta/2) + 
\log_2(D \Delta / 8a))\leq
D \:,
\]
and its number of vertices is 
\[
n = L\delta + (2L-1) + L(aL/\delta-1)<L(\delta+1+aL/\delta)\:.
\]
We have
\[
L \geq D \Delta /4a
= \Omega( D\Delta + \Delta \ln L + \Delta \ln (\delta+1+aL/\delta)) = \Omega(\Delta \ln n + \Delta D)\:.
\]
By~\lemref{lowerboundlemma}, there exists a vertex $s$ such that
\begin{align*}
&\Pr\{h(\tcal(G,s)\geq\Omega(\Delta \ln n + \Delta D))\}
 \geq
\Pr\{h(\tcal(G,s)\geq L-1)\} \\
&\geq 1 - L^2\left(\exp((1-a/2e^2)L) 
                - \exp(-aL/(e^2\delta)) 
                - \exp(-2aL/\delta)
              \right)\\
&= 1 -\left(\exp(-L+2\ln L) 
                - \exp(-8L/\Delta+2\ln L) 
                - \exp(-16e^2L/\Delta+2\ln L)
              \right)=1-o_L(1)=1-o_n(1)\:,
\end{align*}
completing the proof.
\end{proof}

\section{Lower Bounds for Degenerate Graphs}
\seclabel{lowerdegenerate}

\thmref{genericlowerbound} shows that \thmref{alantree-upper-bound}
cannot be strengthened without knowing more about $G$ than its number of vertices, maximum
degree, and diameter.  \thmref{alantree-upper-bound-degenerate} provides a stronger
upper bound under the assumption that $G$ is $d$-degenerate.  
In this section we
show that \thmref{alantree-upper-bound-degenerate} is also tight, even
when restricted to  very special subclasses of $d$-degenerate graphs.

First we show that the bound given by
\thmref{alantree-upper-bound-degenerate} for $O(1)$-degenerate graphs
is tight, even when we restrict our attention to planar graphs, which
are 5-degenerate.  Since planar graphs have genus 0, this lower bound
also shows that \thmref{alantree-upper-bound-genus}, which applies to
bounded genus graphs, is  tight.

\begin{thm}\thmlabel{planartight}
There exists an absolute constant $c>0$ such that
for any $\Delta>1$ and $D\geq 10^6 \ln \Delta$ there exists a planar graph 
with diameter $\leq D$, maximum degree $\leq \Delta$,
and a vertex $s$ such that
with probability $1-o_n(1)$ we have $h(\tcal(G,s))>c\sqrt{\Delta}(D+\ln n)$.
\end{thm}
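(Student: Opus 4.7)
The plan is to mimic the non-planar construction used in \thmref{genericlowerbound}, replacing the ``ladder graph'' $H$ with a planar analogue whose first-passage percolation time from one end to the other is of order $L/\sqrt{\Delta}$ rather than $L/\Delta$. Concretely, let $\delta=\lfloor(\Delta-1)/2\rfloor$. The planar ladder $H'$ consists of $L$ ``connector'' vertices $c_1,\ldots,c_L$; between consecutive connectors $c_{i-1}$ and $c_{i}$ we insert $\delta$ degree-two ``middle'' vertices, each adjacent to both $c_{i-1}$ and $c_{i}$. Every block is a copy of $K_{2,\delta}$, which is planar, and glueing the blocks in series at the connectors yields a planar graph in which every connector has degree exactly $2\delta\le\Delta-1$. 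The tree $I$ is defined exactly as in \secref{lower-bounds} but with each leaf-incident edge subdivided into a path of length $k=\lceil aL/\sqrt{\delta}\rceil$ for a sufficiently large constant $a$. Finally, identify the $i$th leaf of $I$ (in DFS order) with $c_i$; drawing $I$ inside one face of $H'$ shows that the resulting graph $G$ is planar.

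The key new ingredient is a planar analogue of \lemref{lightning}. The weight of the cheapest path from $c_{i-1}$ to $c_i$ within block $i$ is precisely the random variable $Y_{\delta,1}$ defined in \secref{inequalities}, since it is the minimum over the $\delta$ middle vertices of a sum of two independent $\exponential(1)$ weights. Because the weights in different blocks are independent, $\tau(H',c_{i},c_{j})$ is stochastically dominated by a sum of $|j-i|$ i.i.d.\ copies of $Y_{\delta,1}$, and by \lemref{sqrtab} this sum is $O(L/\sqrt{\delta})=O(L/\sqrt{\Delta})$ with probability $1-e^{-\Omega(L)}$; a union bound over the $O(L^2)$ pairs of connectors preserves high probability. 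The analogue of \lemref{slow-train} is essentially unchanged: any path in $I$ between two distinct leaves uses at least $2k$ edges, so by \eqref{head-bound} its weight is at least $2aL(1-o(1))/\sqrt{\delta}$ with probability exponentially close to one.

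The analogue of \lemref{lowerboundlemma} then proceeds as in the original: either $\tcal(G,c_1)$ uses no edge of $I$ as a shortcut between two distinct leaves, in which case the path in $\tcal$ from $c_1$ to the leaf $c_L$ lies entirely within $H'$ and therefore has at least $2(L-1)$ edges (two per block), or else there exist two leaves $v,w$ of $I$ with $\tau(I,v,w)\le\tau(H',v,w)$, an event whose probability is bounded by a union bound over the $O(L^2)$ pairs together with the failure probabilities above. Choosing $a$ larger than the absolute constant appearing in \lemref{sqrtab} makes the ``bad'' event have probability $o_n(1)$, and gives $h(\tcal(G,c_1))\ge 2(L-1)$ with high probability. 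Finally one sets $L$ to be roughly $\sqrt{\Delta}D/C$ for an appropriate absolute constant $C$, so that $n=O(L\delta+Lk)$ is polynomial in $\Delta$ and $D$, the diameter is $O(L/\sqrt{\delta}+\log L)=O(D)$ (using the shortcuts through $I$), and $2(L-1)=\Omega(\sqrt{\Delta}(D+\ln n))$ under the hypothesis $D\ge 10^6\ln\Delta$.

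The main obstacle is essentially a bookkeeping exercise: we must carry through the (rather large) constants from \lemref{sqrtab} and choose $a$ correspondingly large, while simultaneously verifying that the diameter and vertex count remain under control. The new conceptual ingredient compared to \secref{lower-bounds} is modest --- the planar ``theta'' blocks $K_{2,\delta}$ give first-passage spreading of order $1/\sqrt{\delta}$ per block rather than the $1/\delta$ of the complete bipartite blocks used there --- but this is precisely the quantitative gap between \thmref{alantree-upper-bound} and \thmref{alantree-upper-bound-degenerate}, which is why the planar lower bound loses exactly a factor of $\sqrt{\Delta}$ relative to the non-planar one.
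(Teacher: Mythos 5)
Your proposal is correct and follows essentially the same route as the paper's proof: the same planar ``theta-chain'' ladder built from $K_{2,\delta}$ blocks, the same observation that one block contributes a $Y_{\delta,1}$ random variable, concentration via \lemref{sqrtab}, the subdivided binary tree $I$ with leaf-edges subdivided $\Theta(L/\sqrt{\delta})$ times, and the same competition argument showing the percolation path from $s$ to the far end stays inside $H$. The only cosmetic difference is that you identify the leaves of $I$ with the connector vertices $c_i$, whereas the paper identifies them with one middle (degree-$2$) vertex from each block; both give the same height bound $\geq 2L-2$ and the same asymptotics for $n$, the diameter, and the maximum degree.
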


\begin{proof}

Let $C =10^5$, $a=e^2C$, 
$\delta=\Delta/2$, and 
$L=D\sqrt{\delta}/3a$, and 
Let $H$ be the graph shown in \figref{planar-h},
where each $V_i$ has  $\delta$ vertices.
Let $I$ be the perfect binary tree with $L$ leaves,
with each leaf-incident edge subdivided $aL/\sqrt \delta-1$ times.
Let $G$ be the graph obtained from identifying 
the $i$th leaf of $I$ with an arbitrary vertex from $V_i$.
Note that $G$ is a planar graph
with maximum degree $2\delta=\Delta$,
diameter $2 (aL/\sqrt{\delta}+1+\log_2 L)\leq D$,
and $n=\delta L + L - 1 + (2L-1) + L (aL/\delta-1)=O(\delta L + L^2/\delta)$
 vertices.
Let $s$ be an arbitrary vertex in $V_1$.
Since 
$L=\Omega(\sqrt{\Delta}(D+\ln n))$,
to complete the proof, we need only show that
with probability $1-o_n(1)$ we have 
 $h(\tcal(G,s)) \geq 2L-2$.

Choose an arbitrary vertex $t\in V_L$.
Let $\mathcal A$ denote the event
$\tau(H,s,t) \leq C L / \sqrt \delta$,
and let $\mathcal B$ denote the event
``for all pairs  $v$ and $w$ of leaves of $I$ we have
$\tau(I,v,w) > CL / \sqrt \delta$.
Note that if both $\mathcal A$ and $\mathcal B$ happen,
then the path in $\tcal(G,s)$ from $s$ to $t$ uses edges from $H$ only, which implies the height of this tree is at least $2L-2$.
To complete the proof via the union bound, we need only show that each of $\mathcal A$ and $\mathcal B$ happen with probability $1-o_L(1)=1-o_n(1)$.

We start with  $\mathcal A$.
  In $H$, one can go from the vertex in-between $V_i$ and $V_{i+1}$ to the vertex in-between
  $V_{i+1}$ and $V_{i+2}$ by taking a path whose weight is 
  distributed as a $Y_{\delta,1}$ random variable (recall the definition of a $Y_{a,b}$ random variable  from \secref{prelim}).
Therefore, we have 
\[
\tau(H,s,t) = X_1+X_2 + \sum_{i=1}^{L-2}Z_i \:,
\]
where $X_1,X_2$ are independent $\exponential(1)$
random variables (weights of the first and last edges),
and $Z_i$'s are independent $Y_{\delta,1}$ random variables.
Since $C/3\geq 3\times (64+1024)$,
Using \lemref{sqrtab} (concentration for the sum of $Y_{a,b}$ random variables) we have
\[
1-\Pr\{\mathcal A\}
\leq
2 \Pr\left\{X_1 >  CL / 3\sqrt \delta\right\}
+
\Pr\left\{ \sum_{i=1}^{L-1}Z_i >  CL / 3\sqrt \delta\right\}
\leq
2\exp\left(-CL / 3\sqrt \delta\right) +
\exp(-(L-2)/9)
=o_L(1)
\]

We now prove $\mathcal B$ happens with high probability.
The path connecting
any pair of leaves of $I$ contains at least $2aL/\sqrt \delta$ edges,
each of them having an independent exponential(1) weight.
Therefore, using union bound over all pairs and using~\eqref{head-bound} we get
\[
1-\Pr\{\mathcal B\}
\leq
\binom{L}{2} \times \Pr\{\erlang(2aL/\sqrt \delta,1)\leq CL/\sqrt \delta\}\leq
L^2 \times (eC/2a)^{2aL/\sqrt{\delta}} = o_L(1)\:,
\]
completing the proof.
\end{proof}

%

  \begin{figure}
    \begin{center}
      \includegraphics{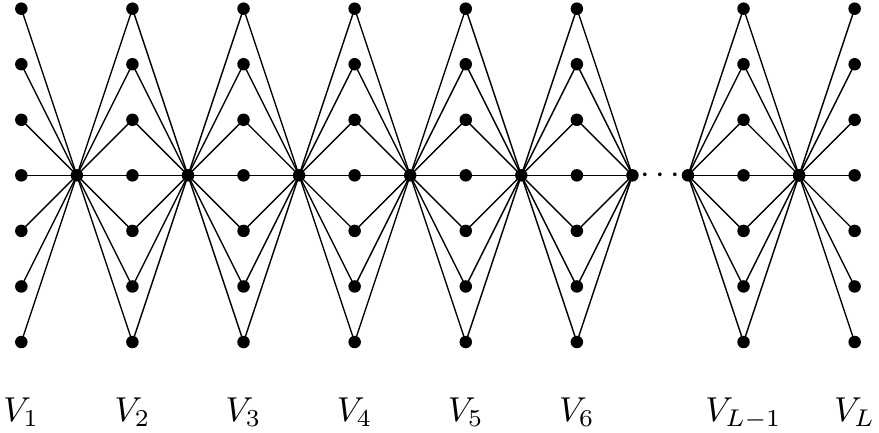}
    \end{center}
    \caption{The graph $H$ in the proof of \thmref{planartight}.}
    \figlabel{planar-h}
  \end{figure}
 
%

Next we describe a lower-bound construction that is $d$-degenerate,
has thickness $d$ and treewidth $O(d)$.  This construction shows that
\thmref{alantree-upper-bound-degenerate} is asymptotically tight for
all values $d\leq\Delta$ (with certain restrictions etc.).

\begin{thm}\thmlabel{alantree-lower-bound-degenerate}
There exists an absolute constant $c>0$ such that
for any $\Delta>1$ with
$D\geq 10^6 \ln \Delta$
and $d < \Delta$, 
 there exists a  graph  $G$
with diameter $\leq D$, maximum degree $\leq \Delta$, and the following properties:

(i) $G$ is $d$-degenerate, has thickness $\leq d$ and treewidth $\leq 2d+1$.

(ii) $G$ has a vertex $s$ such that
with probability $1-o_n(1)$ we have $h(\tcal(G,s))>c\sqrt{d\Delta}(D+\ln n)$.

\end{thm}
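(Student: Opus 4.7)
The plan is to adapt the construction from the proof of~\thmref{planartight} by replacing the single highway vertex between consecutive $V_i$'s with a hub set of size $d/2$, so that each two-step transit through the resulting ladder realises a $Y_{\Delta/2,d/2}$ random variable (in the notation of~\lemref{sqrtab}) instead of a $Y_{\delta,1}$ one.  Concretely, fix a sufficiently large constant $\alpha$, let $L$ be the largest power of $2$ not exceeding $D\sqrt{d\Delta}/\alpha$, and build a ladder $H$ with hub sets $U_1,\ldots,U_L$ of size $d/2$ and transit sets $V_1,\ldots,V_{L-1}$ of size $\Delta/2$, placing a complete bipartite graph between $V_i$ and each of $U_i$ and $U_{i+1}$.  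Let $I$ be the perfect binary tree with $L$ leaves in which each leaf-incident edge is subdivided into a path of length $M=\lceil D/2\rceil-\log_2 L$, build $G$ by identifying the $i$th leaf of $I$ with an arbitrary vertex of $U_i$, and let $s$ and $v$ be the $I$-leaves lying in $U_1$ and $U_L$ respectively.

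I would first verify the structural properties.  Every $U$-vertex of $H$ has degree $\Delta$ and every $V$-vertex has degree $d$, so placing all $V$-vertices before all $U$-vertices yields an ordering witnessing $d$-degeneracy; since $d$-degenerate graphs have arboricity at most $d$, this also gives thickness at most $d$.  For treewidth, the bags $U_i\cup U_{i+1}\cup\{w\}$ (size $d+1$) indexed by $w\in V_i$, chained within each layer along $U_i\cup U_{i+1}$ and across consecutive layers along $U_{i+1}$, give a tree decomposition of width $d\le 2d+1$.  Gluing in $I$ and identifying $L$ leaves with hub vertices increases the degeneracy, thickness, and treewidth by at most an additive constant and raises the degree of $L$ hub vertices by exactly one; both perturbations can be absorbed into the constants by using $\lfloor(\Delta-1)/2\rfloor$ and $d-1$ in place of $\Delta/2$ and $d$ where necessary.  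The diameter of $G$ is at most the diameter of $I$ (up to an $O(1)$ detour from a non-leaf hub vertex to the leaf of its layer), hence at most $D$ by the choice of $M$, and $n=\Theta(\Delta L+LM)$.

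For the probabilistic analysis, the core observation is that for any $u\in U_i$ the minimum-weight 2-path from $u$ through $V_i$ to any vertex of $U_{i+1}$ equals $\min_{w\in V_i}\bigl(\tau(uw)+\min_{u'\in U_{i+1}}\tau(wu')\bigr)$, and the inner minima over different $w$'s are genuinely independent $\exponential(d/2)$ variables because the $V_i$-to-$U_{i+1}$ edges incident to distinct $V_i$-vertices are pairwise disjoint.  Thus each such transit is exactly $Y_{\Delta/2,d/2}$-distributed, and because consecutive transits use disjoint edge sets they are mutually independent.  Chaining $L-2$ such transits and then a final $Y_{\Delta/2,1}$-distributed transit that lands on the specific target $v$, and applying~\lemref{sqrtab} to the resulting sum of i.i.d.\ $Y_{\Delta/2,d/2}$ variables, gives $\tau(H,s,v)\le c_1 L/\sqrt{d\Delta}$ with probability $1-e^{-\Omega(L)}$ for a universal constant $c_1$.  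The $I$-side lower bound is identical to~\thmref{planartight}: each leaf-to-leaf $I$-path uses at least $2M$ independent $\exponential(1)$ edges, so~\eqref{head-bound} and a union bound over the $\binom{L}{2}$ pairs give $\tau(I,w_1,w_2)>c_2 D$ simultaneously for all distinct leaves $w_1,w_2$, with $c_2>c_1/\alpha$ provided $\alpha$ is chosen large enough.  On the intersection of these two events, any simple $s$-$v$ walk that uses any edge of $I$ must contain a leaf-to-leaf $I$-subpath heavier than the pure-$H$ bound, so the first-passage-percolation minimum $s$-$v$ path lies entirely in $H$ and has length at least $2(L-1)$, yielding $h(\tcal(G,s))\ge 2(L-1)=\Omega(\sqrt{d\Delta}(D+\ln n))$ since the hypothesis $D\ge 10^6\ln\Delta$ forces $\ln n=O(D)$.

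The main obstacle is confirming that the 2-path transit is genuinely $Y_{\Delta/2,d/2}$-distributed despite the complete bipartite sharing of the $U_{i+1}$ endpoints across different $V_i$-intermediaries, and that consecutive transits remain independent; in both cases the resolution is that the sharing is of vertices only and not of edges, so the distinct edge weights are independent and the i.i.d.\ structure required by~\lemref{sqrtab} is preserved.
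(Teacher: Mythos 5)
Your proposal follows the same route as the paper: glue a ladder $H$ realizing $Y_{a,b}$-distributed two-step transits to a subdivided perfect binary tree $I$, invoke \lemref{sqrtab} to show the infection races through $H$, and use the slow-train estimate together with a union bound over leaf pairs to rule out shortcuts through $I$. Your construction differs in one detail that actually improves on the paper's argument: you size the wide transit layers so their vertices have degree exactly $d$ and attach the $I$-leaves to the small hub layers, so that the justification ``vertices of degree $>d$ form an independent set'' holds literally; in the paper's ladder the $V_i$ vertices have degree $2d$ and are adjacent to the degree-$\Delta$ hubs $V_i'$, so the paper's stated argument only yields $2d$-degeneracy (a constant-factor slip that is absorbed inside the $\sqrt{d\Delta}$). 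Your thickness-via-arboricity shortcut is also cleaner than the paper's explicit $d$-colouring, and your treewidth sketch, though less detailed than the paper's explicit decomposition, is repairable in the same way.
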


\begin{proof}
Let $C =10^5$, $a=e^2C$, 
$\delta=\Delta/2$, and 
$L=D\sqrt{d\Delta}/8a$, and 
Let $H$ be the graph shown in \figref{degenerate-h},
where each $V_i$ has $\delta$ vertices
  and each $V_i'$ has $d$ vertices,
  and each of the pairs
  $(V_1,V'_1)$, $(V'_1,V_2)$, $(V_2,V'_2)$, etc.\ form a complete bipartite graph.
Let $I$ be the perfect binary tree with $L$ leaves,
with each leaf-incident edge subdivided $aL/\sqrt{d\delta}-1$ times.
Consider the leaves of
$I$ in the order they are encountered in a depth first-traversal, for each $i\in\{1,\ldots,L\}$ identify the $i$th leaf of $I$
with some vertex in $V_i$. 
Let $G$ be the resulting graph.
Note that $G$ has maximum degree $2\delta=\Delta$,
diameter $\leq 2(1+aL/\sqrt{d\delta}+\log_2 L)\leq D$,
and $n=(\delta+D)L+2L-1+L(aL/d\delta-1)=O(\Delta L + L^2/d\Delta)$ vertices.
 
  \begin{figure}
    \begin{center}
       \includegraphics{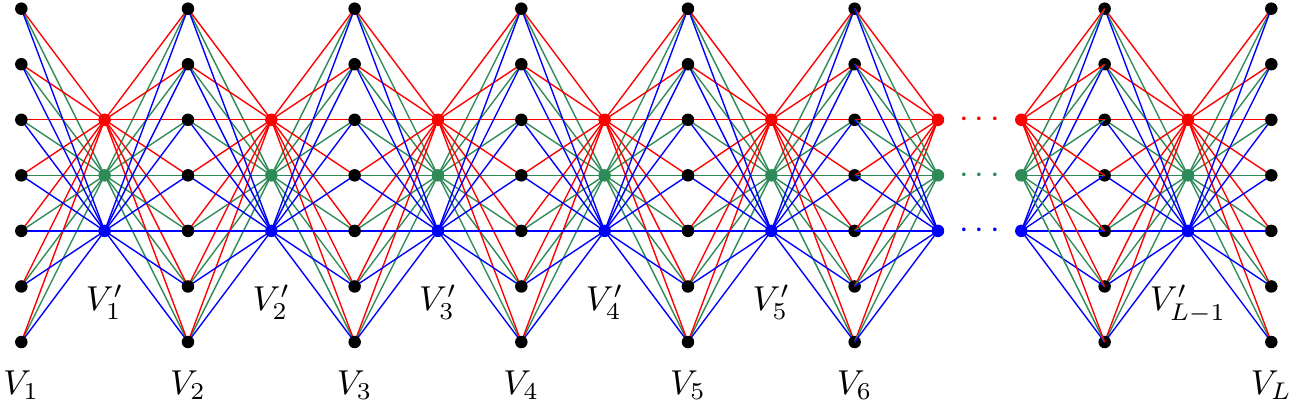}
    \end{center}
    \caption{The $d$-degenerate graph $H$ used in the proof of
       \thmref{alantree-lower-bound-degenerate}. In this example, $\delta=7$
        and $d=3$.}
    \figlabel{degenerate-h}
  \end{figure}
  
(i)
  Graph $G$ is $d$-degenerate because the vertices of degree greater
  than $d$ form an independent set. Therefore, every induced subgraph
  of $G$ is either an independent set (so has a vertex of degree 0)
  or contains a vertex of degree at most $d$.

  To see that $G$ has thickness $d$, 
  for each $i=1,\dots,L$, assign to each vertex of 
  $V_{i}'$ a distinct colour from one of $d$ colour classes.   Now partition the edges incident to
  these vertices among $d$ subgraphs depending on the color of the
  vertex they are incident to.  Edges not incident to these vertices
  can be assigned to any subgraph.  With this partition of edges, each
  subgraph becomes a subgraph of the planar graph used in the proof of
  \thmref{planartight}.

  To see that $G$ has treewidth $2d+1$, we build a tree decomposition
  of $G$ with bags of maximum size $2d+2$.  For convenience, we 
  define $V_0=V_{L+1}=\emptyset$.

  We begin with a tree $T'$ of empty bags that has the same shape as $I$. For each vertex $v$ of $I$, let $B_v$ denote the bag of $v$.
  \begin{enumerate}
    \item Assign each vertex of $v$ of $I$ to $B_v$ and to the (up to 2)
      children of $B_v$ in $T'$.

    \item Let $v_1,\ldots,v_L$ be the
     leaves of $I$ ordered so that each $v_i\in V_i$. In the leaf bag
     $B_{v_i}$ of $T'$ we add all vertices in
     $V_{i-1}'$ and $V'_{i}$.

     Now each vertex in $V_{i}'$ appears in $B_{v_i}$ and $B_{v_{i+1}}$;
     so we add all vertices of $V_i'$ to each of the bags on the path in $T'$ from $B_{v_i}$ 
     to $B_{v_{i+1}}$.

    \item  Finally, to each $B_{v_i}$ we attach $\delta-1$ bags as leaves
     of $T'$; in each bag we put all the vertices in $V_i' \cup V_{i+1}'$, and a distinct vertex of $V_i\setminus\{v_i\}$. We
     call each such bag $B_v$, where $v$ is the unique vertex of
     $V_i\setminus\{v_i\}$ contained in the bag.
  \end{enumerate}

  No bag contains more than $2d+2$ vertices: 
  for a leaf $v_i$,
  $B_{v_i}$ contains $v_i$ and its parent, as well as vertices in $V_{i-1}'\cup V_i'$.
  For a non-leaf vertex $v$ of $I$, observe that (in any binary tree)
  there are at most two distinct indices $i,j$ such that $v$ lies on the $(v_i,v_{i+1})$-path in $I$ and on
  the $(v_j,v_{j+1})$-path, hence $B_v$ contains $v$ and its parent, as well as possibly $V_i'$ and $V_j'$. For each $v\in V_i\setminus\{v_i\}$,
  $B_v$ contains at most $2d+1$ vertices; $v$ and the vertices in
  $V_{i-1}'\cup V_{i}'$.

  For each edge $vw$ of $G$, there is some bag that contains both $v$
  and $w$: If $vw$ is an edge of $T$ with $v$ a child of $w$ then $B_v$
  contains both $v$ and $w$. Otherwise, $v\in V_i$ and $w\in V_{i-1}'$
  or $w\in V_{i}'$, in which case $v$ and $w$ appear in $B_v$.

  Finally, for each vertex $v$ of $G$, the subgraph of $T'$ induced by
  bags containing $v$ is connected: For a vertex $v\in I$ this subgraph
  is either an edge or a single vertex.  For a vertex $v\in V_i$ this
  subgraph is a single vertex.  For a vertex $v\in V'_{i}$ this subgraph
  is a path joining two vertices of $T'$.

  Therefore, $T'$ is a tree-decomposition of $G$ whose largest bag
  has size $2d+2$, and thus treewidth of $G$ is at most $2d+1$.

(ii)
Let $s$ be an arbitrary vertex in $V_1$.
Since 
$L=\Omega(\sqrt{d\Delta}(D+\ln n))$,
to prove part (ii) we need only show that
with probability $1-o_n(1)$ we have 
 $h(\tcal(G,s)) \geq 2L-2$.

Choose an arbitrary vertex $t\in V_L$.
Let $\mathcal A$ denote the event
$\tau(H,s,t) \leq C L / \sqrt {d\delta}$,
and let $\mathcal B$ denote the event
``for all pairs  $v$ and $w$ of leaves of $I$ we have
$\tau(I,v,w) > CL / \sqrt {d\delta}$.
Note that if both $\mathcal A$ and $\mathcal B$ happen,
then the path in $\tcal(G,s)$ from $s$ to $t$ uses edges from $H$ only, which implies the height of this tree is at least $2L-2$.
To complete the proof via the union bound, we need only show that each of $\mathcal A$ and $\mathcal B$ happen with probability $1-o_L(1)=1-o_n(1)$.

We start with  $\mathcal A$.
  In $H$, one can go from a given vertex in $V'_i$ to some vertex in $V'_{i+1}$  by taking a path whose weight is 
  distributed as a $Y_{\delta,d}$ random variable.
Therefore, $\tau(H,s,t)$  is stochastically dominated by
\[
X_1+X_2 + \sum_{i=1}^{L-2}Z_i \:,
\]
where $X_1,X_2$ are independent $\exponential(1)$
random variables (weights of the first and last edges), and
$Z_i$'s are independent $Y_{\delta,d}$ random variables.
Since $C/3\geq 3\times(64+1024)$,
Using \lemref{sqrtab} (concentration for the sum of $Y_{a,b}$ random variables) we have
\[
1-\Pr\{\mathcal A\}
\leq
2 \Pr \{X_1 >  CL / 3\sqrt {d\delta}\}
+
\Pr\{ \sum_{i=1}^{L-2}Z_i >  CL / 3\sqrt {d\delta}\}
\leq
2\exp(-CL / 3\sqrt {d\delta}) +
\exp(-(L-2)/9)
=o_L(1)
\]

We now prove $\mathcal B$ happens with high probability.
The path connecting
any pair of leaves of $I$ contains at least $2aL/\sqrt{d \delta}$ edges,
each of them having an independent exponential(1) weight.
Therefore, using union bound over all pairs and using~\eqref{head-bound} we get
\[
1-\Pr\{\mathcal B\}
\leq
\binom{L}{2} \times \Pr\{\erlang(2aL/\sqrt{d \delta},1)\leq CL/\sqrt {d\delta}\}\leq
L^2 \times (eC/2a)^{2aL/\sqrt{d\delta}} = o_L(1)\:,
\]
completing the proof.
\end{proof}

\section*{Acknowledgements}

Some of this research took place at the Workshop on Random Geometric
Graphs and Their Applications to Complex Networks, at the Banff
International Research Station, November 6--11, 2016.  More of this
research took place at the Rio Workshop on Geometry and Graphs, at IMPA,
February 12--18, 2017.  In both cases, we are grateful to the workshop
organizers and other participants for providing stimulating working
environments.  We are especially grateful to G\'abor~Lugosi for helpful
discussions on many aspects of this work and to Tasos~Sidiropoulos
for asking us about planar graphs.
\bibliographystyle{plain}
\bibliography{alantree}

\begin{thebibliography}{10}

\bibitem{fpp_survey}
Antonio Auffinger, Michael Damron, and Jack Hanson.
\newblock 50 years of first passage percolation.
\newblock {\em arXiv}, 1511.03262~[math.PR], 2016.

\bibitem{balister.bollobas.ea:first-passage}
P.~N. Balister, B.~Bollob{\'a}s, A.~M. Frieze, and O.~M. Riordan.
\newblock The first-passage diameter of the cube, 2017.
\newblock Unpublished manuscript.

\bibitem{bollobas:isoperimetric}
B{\'{e}}la Bollob{\'{a}}s.
\newblock The isoperimetric number of random regular graphs.
\newblock {\em Eur. J. Comb.}, 9(3):241--244, 1988.

\bibitem{bollobas.kohayakawa:on}
B{\'e}la Bollob{\'a}s and Yoshiharu Kohayakawa.
\newblock On {R}ichardson's model on the hypercube.
\newblock In {\em Combinatorics, Geometry, and Probability (Cambridge 1993)},
  pages 129--137. Cambridge University Press, 1997.

\bibitem{boucheron2013concentration}
St{\'e}phane Boucheron, G{\'a}bor Lugosi, and Pascal Massart.
\newblock {\em Concentration inequalities: A nonasymptotic theory of
  independence}.
\newblock Oxford university press, 2013.

\bibitem{devroye:branching}
Luc Devroye.
\newblock Branching processes in the analysis of the heights of trees.
\newblock {\em Acta Informatica}, 24(3):277--298, 1987.

\bibitem{richardson_survey}
R.~Durrett.
\newblock Stochastic growth models: recent results and open problems.
\newblock In {\em Mathematical approaches to problems in resource management
  and epidemiology ({I}thaca, {NY}, 1987)}, volume~81 of {\em Lecture Notes in
  Biomath.}, pages 308--312. Springer, Berlin, 1989.

\bibitem{fill.pemantle:percolation}
James~Allen Fill and Robin Pemantle.
\newblock Percolation, first-passage percolation and covering times for
  {R}ichardson's model on the $n$-cube.
\newblock {\em The Annals of Applied Probability}, 3(2):593--629, 1993.

\bibitem{janson}
Svante Janson.
\newblock Tail bounds for sums of geometric and exponential variables.
\newblock available in \url{http://www2.math.uu.se/~svante/papers/sjN14.pdf}.

\bibitem{martinsson:unoriented}
Anders Martinsson.
\newblock Unoriented first-passage percolation on the $n$-cube.
\newblock {\em The Annals of Applied Probability}, 26(5):2597--2625, 2016.

\bibitem{McDiarmid1998}
Colin McDiarmid.
\newblock Concentration.
\newblock In Michel Habib, Colin McDiarmid, Jorge Ramirez-Alfonsin, and Bruce
  Reed, editors, {\em Probabilistic Methods for Algorithmic Discrete
  Mathematics}, pages 195--248. Springer Berlin Heidelberg, Berlin, Heidelberg,
  1998.
\newblock available at
  \url{http://www.stats.ox.ac.uk/people/academic_staff/colin_mcdiarmid/?a=4139}.

\bibitem{mohar.thomassen:graphs}
Bojan Mohar and Carsten Thomassen.
\newblock {\em Graphs on Surfaces}.
\newblock Johns Hopkins University Press, Baltimore, Maryland, 2001.

\bibitem{pittel:note}
Boris Pittel.
\newblock Note on the heights of random recursive trees and random m-ary search
  trees.
\newblock {\em Random Struct. Algorithms}, 5(2):337--348, 1994.

\bibitem{wolle.koster.ea:note}
Thomas Wolle, Arie~M.C.A. Koster, and Hans~L. Bodlaender.
\newblock A note on contraction degeneracy.
\newblock Technical Report UU-CS-2004-042, University of Utrecht, 2004.

\end{thebibliography}

\appendix

\section{Proof of~\lemref{sqrtab}}
\applabel{yab}

We will use the following inequality, which holds for any positive integer $k$ and any real number $\lambda$
(see \cite[Theorem~5.1(ii)]{janson}):
\begin{equation}
\eqlabel{simpleerlang}
\Pr\{\erlang(k,1) \geq \lambda k\}
\leq\exp(1-\lambda) \:.
\end{equation}

We will also use the following inequality, which holds for any binomial random variable $X$, and any $M\leq\E[X]$ (see \cite[Theorem~2.3(c)]{McDiarmid1998}):
\begin{equation}
\eqlabel{binomialhalf}
\Pr\{X < M/2\} \leq \exp(-M/8)\:.
\end{equation}

We will use the following version of
Bernstein's inequality (see Theorem~2.10 and Corollary~2.11
in~\cite{boucheron2013concentration}).


\begin{thm}[Bernstein's inequality]
\label{thm:bernstein}
Let $X_1,\dots,X_m$ be non-negative independent random variables for which there exist $v,c$ satisfying
\[\sum_{i=1}^m \E [X_i^p] \leq v p!  c^{p-2}/2\]
for all positive integers $p\geq2$.
Then for any $t>0$ we have
\[
\Pr \left\{ \sum_{i=1}^m (X_i - \E[X_i]) \geq ct + \sqrt{2vt}\right\}\leq e^{-t} \:,
\]
and
\[
\Pr \left\{ \sum_{i=1}^m (X_i - \E[X_i]) \geq t
\right\}\leq \exp\left(-\frac{t^2}{2v+2ct}\right) \:,
\]
\end{thm}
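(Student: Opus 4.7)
The plan is to derive both tail bounds from the classical Cram\'er--Chernoff method, which combines Markov's inequality applied to the moment generating function (MGF) of $S := \sum_{i=1}^m (X_i-\E[X_i])$ with an optimisation over a tilting parameter $\lambda>0$. Markov on $e^{\lambda S}$ gives $\Pr\{S\ge u\}\le e^{-\lambda u}\,\E[e^{\lambda S}]$, and independence factorises $\E[e^{\lambda S}]=\prod_{i=1}^m \E[e^{\lambda(X_i-\E[X_i])}]$, so everything reduces to controlling a single log-MGF and then tuning $\lambda$.

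The first real step is to turn the moment hypothesis into a bound on each log-MGF. Using $\log y\le y-1$ and the Taylor expansion $e^{\lambda x}=1+\lambda x+\sum_{p\ge 2}\lambda^p x^p/p!$ (termwise legitimate because $X_i\ge 0$, so $\E[X_i^p]$ is finite and monotone convergence applies), I would write
\[
\log \E\bigl[e^{\lambda(X_i-\E[X_i])}\bigr]
= \log \E[e^{\lambda X_i}]-\lambda\E[X_i]
\le \E[e^{\lambda X_i}]-1-\lambda\E[X_i]
= \sum_{p\ge 2}\frac{\lambda^p\E[X_i^p]}{p!}.
\]
Summing over $i$ and plugging in the assumption $\sum_i\E[X_i^p]\le v\,p!\,c^{p-2}/2$ yields, for every $0<\lambda<1/c$,
\[
\log \E[e^{\lambda S}]\le \frac{v\lambda^2}{2}\sum_{p\ge 2}(\lambda c)^{p-2}=\frac{v\lambda^2}{2(1-\lambda c)}=:\psi(\lambda).
\]
Combined with Markov, this produces the unified bound $\Pr\{S\ge u\}\le\exp(\psi(\lambda)-\lambda u)$ for all admissible $\lambda$.

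The remainder is a Legendre optimisation. For the second inequality I would pick $\lambda=u/(v+cu)$, which lies in $(0,1/c)$ and, after a routine one-line calculation, yields $\psi(\lambda)-\lambda u=-u^2/(2(v+cu))$, exactly the claimed exponent. For the first inequality I would either compute the Cram\'er transform $\psi^{*}(u)=\sup_\lambda(\lambda u-\psi(\lambda))$ explicitly and verify via standard inverse-function manipulations that $\psi^{*-1}(t)\le ct+\sqrt{2vt}$, or equivalently fix $t>0$, solve $\psi(\lambda)-\lambda u=-t$ for $\lambda$, and check that the corresponding $u$ does not exceed $ct+\sqrt{2vt}$.

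The only delicate point is the termwise Taylor expansion of the MGF; this is precisely where nonnegativity of the $X_i$ and the summability supplied by the moment hypothesis are needed. Once that justification is in place, the rest is pure Cram\'er--Chernoff bookkeeping, and I do not anticipate any genuine obstacle.
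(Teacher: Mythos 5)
The paper does not prove this statement; it imports it directly from Boucheron, Lugosi, and Massart (Theorem~2.10 and Corollary~2.11 of that book) and uses it as a black box. Your outline reconstructs precisely the textbook argument: the Cram\'er--Chernoff bound combined with the sub-gamma log-MGF estimate $\log\E[e^{\lambda S}] \le \psi(\lambda) := \tfrac{v\lambda^2}{2(1-\lambda c)}$ for $0<\lambda<1/c$, obtained via $\log y \le y-1$, the termwise Taylor expansion (justified by nonnegativity of the $X_i$ together with Tonelli), and the moment hypothesis. Your choice $\lambda = u/(v+cu)$ does give $\psi(\lambda)-\lambda u = -u^2/(2v+2cu)$, which is the second bound.

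For the first bound you leave the optimisation as a sketch, which is fine, but to be safe you should note that it does \emph{not} follow by substituting $u=ct+\sqrt{2vt}$ into the second bound (that substitution yields an exponent strictly smaller in magnitude than $t$). One really must compute the Cram\'er transform: with $\mu := 1-\lambda c$, the optimality condition gives $u = v(1-\mu^2)/(2c\mu^2)$, hence $\psi^*(u) = \tfrac{v}{2c^2}\bigl(\sqrt{1+2cu/v}-1\bigr)^2$, and solving $\psi^*(u)=t$ gives exactly $u = ct + \sqrt{2vt}$. With that algebra filled in, the argument is complete and correct.
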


We begin with a helper lemma.

\begin{lem}
\label{lem:yabconc}
For any $t$ we have
\[
\Pr\{Y_{a,b}>t\} \leq \exp(-at/64)+\exp(-abt^2/1024)\:.
\]
\end{lem}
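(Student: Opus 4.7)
The plan is to express $Y_{a,b}$ as the minimum over the $a$ root-incident edges. For each $i\in\{1,\dots,a\}$, let $W_i\sim\exponential(1)$ be the weight of the $i$th edge from the root, and let $E_i$ be the minimum of the $b$ i.i.d.\ $\exponential(1)$ weights on the edges descending from child $i$ to its leaves. Then $E_i\sim\exponential(b)$ by the minimum-of-exponentials fact recalled in \secref{prelim}, all the $W_i,E_i$ are mutually independent, and hence
\[
\Pr\{Y_{a,b}>t\}
 = \Pr\!\left\{\min_{1\le i\le a}(W_i+E_i)>t\right\}
 = \Pr\{W+E>t\}^a,
\]
where $W\sim\exponential(1)$ and $E\sim\exponential(b)$ are independent. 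The proof therefore reduces to bounding the one-dimensional quantity $q(t):=\Pr\{W+E>t\}$ and then raising to the $a$-th power.

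The two terms on the right-hand side of the claimed bound correspond to two regimes for $t$. For the regime $t\ge 2$, I will apply a union bound: since $\{W+E>t\}\subseteq\{W>t/2\}\cup\{E>t/2\}$, and since $b\ge 1$ gives $e^{-bt/2}\le e^{-t/2}$, we obtain $q(t)\le e^{-t/2}+e^{-bt/2}\le 2e^{-t/2}$. Raising to the $a$-th power yields $q(t)^a\le\exp(a\ln 2-at/2)$, and the inequality $a\ln 2-at/2\le -at/64$ reduces to $t\ge 64\ln 2/31$, which holds comfortably for $t\ge 2$. This produces the first term $\exp(-at/64)$.

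For the regime $t<2$, I will reverse the inclusion: since $\{W\le t/2\}\cap\{E\le t/2\}\subseteq\{W+E\le t\}$, independence gives $q(t)\le 1-p$ with $p=(1-e^{-t/2})(1-e^{-bt/2})$, so $q(t)^a\le(1-p)^a\le e^{-ap}$. Now I use the elementary estimate $1-e^{-x}\ge\min(x/2,1/2)$ for $x\ge 0$: the condition $t<2$ forces $\min(t/4,1/2)=t/4$, and splitting further on whether $bt\le 2$ or $bt>2$ gives $p\ge bt^2/16$ in the first sub-case and $p\ge t/8$ in the second. Plugging back yields $e^{-ap}\le\exp(-abt^2/1024)$ or $e^{-ap}\le\exp(-at/64)$ respectively, so in each sub-case one of the two summands in the target bound already suffices.

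The only mildly delicate point will be confirming that the constants $64$ and $1024$ are generous enough to absorb the slack in the elementary inequalities $1-e^{-x}\ge x/2$ (valid for $x\le 1$) and $\ln 2\le 31t/64$ (valid for $t\ge 2$); the numerics in the case analysis show that they are, with substantial margin, so no sharper estimates are needed and the bound follows immediately by taking the sum over the regimes.
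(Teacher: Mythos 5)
Your proof is correct, and it takes a genuinely different route from the paper's. You exploit the clean factorization $Y_{a,b}=\min_{1\le i\le a}(W_i+E_i)$ with $W_i\sim\exponential(1)$ and $E_i\sim\exponential(b)$ independent, reduce everything to estimating the scalar $q(t)=\Pr\{W+E>t\}$ and raising to the $a$th power, and then do a short elementary case analysis ($t\ge 2$ via a union bound and $0\le t<2$ via the reverse inclusion and $1-e^{-x}\ge\min(x/2,1/2)$). The paper instead uses a "survival" argument: a node survives if all edges on its root-path have weight $\le t/2$, the numbers $S_1,S_2$ of surviving level-1 and level-2 nodes are binomial, and Chernoff-type binomial concentration (their inequality \eqref{binomialhalf}) gives $\Pr\{S_1<at/16\}\le e^{-at/64}$ and, conditionally, $\Pr\{S_2=0\}\le e^{-abt^2/1024}$, with a large-$t$ regime handled separately by an Erlang tail. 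What each buys: your approach is more elementary (no binomial concentration inequality needed) and makes the two terms in the bound appear as a genuinely disjoint case split rather than as a sum of two failure events; the paper's survival/concentration framing is more robust and would extend more naturally to trees of depth greater than two. One tiny omission in your write-up: you should dispose of $t\le 0$ up front (trivially, since then both summands on the right are $\ge 1$), because the estimate $1-e^{-x}\ge\min(x/2,1/2)$ requires $x\ge 0$; the paper's proof mentions this case explicitly.
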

\begin{proof}
First, consider the case $t>4$.
Note that there exist $a$ independent root-to-leaf paths, the weight of each is $\erlang(2,1)$.
Hence, using~\eqref{simpleerlang} and since $t\geq4$,
\[
\Pr\{Y_{a,b}>t\}
\leq
\Pr\{\erlang(2,1)>t\}^a
\leq
(\exp(1-t/2))^a
\leq
(\exp(-t/4))^a
=\exp(-at/4)
\leq\exp(-at/64)\:.
\]

The case $t\leq 0$ is trivial, so we consider the case $0\leq t\leq 4$.
Note that for such $t$ we have $1-\exp(-t/2)\geq t/8$.
We say a node in the tree \emph{survives} if each of the edges on its path to the root
have weight at most $t/2$.
Note that $Y_{a,b}>t$ implies no node at level 2 survives.
The probability that a node at level 1 (children of the root) survives is $1-\exp(-t/2)$,
so the number of surviving nodes at level 1, $S_1$, is a binomial random variable with mean $a(1-\exp(-t/2))\geq at/8$.
From~\eqref{binomialhalf} we have
\[
\Pr\{S_1 < at/16\}
\leq
\Pr\{S_1 < \E[S_1] / 2\}
\leq
\exp(-\E[S_1]/8)
\leq 
\exp(-at/64)\:.
\]
Conditioned on $S_1\geq at/16$, the number of surviving nodes at level 2, $S_2$, is a binomial random variable with mean
$S_1b(1-\exp(-t/2))\geq abt^2/128$, so using\eqref{binomialhalf} again we have
\begin{align*}
\Pr\{Y_{a,b}>t | S_1\geq at/16\}
& \leq
\Pr\{S_2=0 | S_1\geq at/16\}
\leq
\Pr\{S_2<abt^2/256 | S_1\geq at/16\} \\
& \leq
\exp(-abt^2/1024)\:,
\end{align*}
completing the proof.
\end{proof}

We are now ready to prove
\lemref{sqrtab}.
Let $X_1,\dots,X_m$ be i.i.d.\ distributed as $Y_{a,b}$ for some $a,b$.
Then we want to prove
$\E[X_1] =O(1/a+1/\sqrt{ab})$ and moreover,
\[
\Pr\left\{\sum_{i=1}^m X_i \geq 3 m (64/a + 1024/\sqrt{ab}) \right\}
\leq
\exp(-m/9)\:.
\]

Let $d_1 = a/64$ and $d_2=ab/1024$.
For any positive integer $p$, by Lemma~\ref{lem:yabconc} we have
\[
\E[X_1^p] = \int_0^{\infty} \Pr \{X_1>t^{1/p}\}dt
\leq
\int_0^{\infty} \exp(-d_1 t^{1/p}) + \int_0^{\infty} \exp(-d_2 t^{2/p})
\]
For any positive numbers $c,\alpha$, we have
\begin{align}
\int_0^{\infty} \exp(-c t^{\alpha}) dt
& = 
\int_0^{\infty} \exp(-x) \frac {x^{1/\alpha-1}}{\alpha c^{1/\alpha}}dx
=
\frac{c^{-1/\alpha}}{\alpha}\int_0^{\infty}
e^{-x}x^{1/\alpha-1}
= \frac{c^{-1/\alpha}\Gamma(1/\alpha)}{\alpha},
\eqlabel{expintegral}
\end{align}
whence,
\[
\E[X_1^p] \leq pd_1^{-p}\Gamma(p)
+ pd_2^{-p/2}\Gamma(p/2)/2
\]
In particular, setting $p=1$ gives $\E[X_1]\leq 64/a+1024/\sqrt{ab}\eqqcolon c$.
Let $v=4c^2m$.
For $p\geq 2$, we have
\begin{align*}
\sum_{i=1}^m \E [X_i^p] 
\leq
mpd_1^{-p}\Gamma(p)
+ mpd_2^{-p/2}\Gamma(p/2)/2
\leq
m p! d_1^{-p}
+ m p! d_2^{-p/2}/2
\leq v p! c^{p-2}/2\:.
\end{align*}
Bernstein's inequality (Theorem~\ref{thm:bernstein}) gives that for all $t$,
\[
\Pr\left\{\sum_{i=1}^m X_i \geq 
m \left(64/a+1024/\sqrt{ab}\right)
+ ct + 3c\sqrt {mt} \right\} \leq e^{-t}\:,
\]
and choosing $t=m/9$ completes the proof of the lemma.

\pagebreak
\newpage

\end{document}